\newtheorem{theorem}{Theorem}[section]
\newtheorem{lemma}[theorem]{Lemma}
\newtheorem{corollary}[theorem]{Corollary}
\theoremstyle{definition}
\newtheorem{example}[theorem]{Example}
\theoremstyle{remark}
\newtheorem{remark}[theorem]{Remark}
\newcommand{\ind}{\textup{ind}}
\newcommand{\rank}{\textup{rank}}
\numberwithin{equation}{section}
\def\sbmatrix{\left[\begin{array}}
\def\endsbmatrix{\end{array}\right]}
\begin{document}

\title{On the existence of group inverses of Peirce corner matrices}
\author{Daochang Zhang}
\address{College of Sciences, Northeast Electric Power University, Jilin, P.R. China.
}
\email{daochangzhang@126.com}
\thanks{The first author is supported by the National Natural Science Foundation of China (NSFC) (No. 61672149; No. 51708091). 
The second author is supported by the Ministry of Education, Science and Technological Development, Republic of Serbia (No. 174007).}
\author{Dijana Mosi\' c}
\address{Faculty of Sciences and Mathematics, University of Ni\v s, P.O.
Box 224, 18000 Ni\v s, Serbia.} \email{dijana@pmf.ni.ac.rs}
\author{Tin-Yau Tam}
\address{Department of Mathematics and Statistics, University of Nevada, Reno, NV 89557, USA.}
\email{ttam@unr.edu}

\subjclass[2000]{15A09; 15A30; 65F20}

\date{}


\keywords{Peirce corner matrix, generalized Schur complement, group inverse, Drazin inverse}

\begin{abstract}
We give some statements that are equivalent to the existence of group inverses of Peirce corner matrices of a $2 \times 2$ block matrix and its generalized Schur complements.
As applications, several new results for the Drazin inverses of the generalized Schur complements and the $2 \times 2$ block matrix are obtained and some of them generalize several results in the literature.
\end{abstract}

\maketitle

\section{introduction}
The classical Sherman-Morrison-Woodbury formula
$$
(A-CD^{-1}B)^{-1} = A^{-1} + A^{-1}C(D-BA^{-1}C)^{-1}BA^{-1}
$$
expresses the inverse of the
 Schur complement $A-CD^{-1}B$ of $D$ of the $2\times 2$ block matrix
 $$
 M=\sbmatrix{cc}
A&C\\B&D\endsbmatrix
 $$
   in terms of
the inverses of $A$ and its Schur complement
$D-BA^{-1}C$  of $M$, where $A$ and $D$ are invertible matrices, but not
necessarily with the same size, and $B$ and $C$ are matrices with
 appropriate sizes such that $D-BA^{-1}C$ (and so $A-CD^{-1}B)$
is invertible \cite{Sherman(1950),Woodbury(1950)}. There are
 applications of inverse matrix  formulae
of such type in various fields such as statistics, optimization, networks,
numerical analysis, structural analysis,  partial different equations
\cite{HagerUpdating(1989)}. Formulae
of such type have been developed in the context of generalized
inverses, such as the Moore-Penrose inverse
\cite{bak2003,Meyer(1973)}, the weighted Moore-Penrose inverse
\cite{wei2001}, the group inverse \cite{cas-gon2013}, the weighted
Drazin inverse \cite{chen2008amc}, the generalized Drazin inverse
\cite{deng2011}, and  the Drazin inverse \cite{FIL2015}.

Researchers have tried to find
formulae for the Drazin inverse of $M$ in terms
of its blocks, where $A$ and $D$ are
square matrices \cite{HLW,Hartwig(1977),Meyer(1977)}.
This problem originates from the singular system of differential equations \cite{CMR1976,CampbellLAMA1983},
and was proposed by Campbell and Meyer \cite{Campbell1991}, but it is
still an open problem if no additional assumptions are made on the
blocks.

There are also some significant definitions in the ring theory.
Let $R$ be a ring.
A ring $S\subseteq R$ (with the same multiplication as R,
but not assumed to have an identity initially) is said to be a {\it corner ring} (or simply a {\it corner})
of $R$ if there exists an additive subgroup $C\subseteq R$ such that
\begin{equation}\label{defcorner}
 R=S\oplus C,
 \quad S\cdot C\subseteq C,\quad \text{and}\quad C\cdot S\subseteq C.
\end{equation}
In this case, we write $S\prec R$ , and we call any subgroup $C$ satisfying \eqref{defcorner}
a {\it complement} of the corner ring $S$ in $R$.
In general, such a complement $C$ is far from being unique.
If a corner $S$ of a ring $R$ happens to have a unique complement,
we shall call $S$ a {\it rigid corner} of $R$, and write $S\prec_r R$.

In 2006,  Lam \cite{Lam2006} proved that a corner ring of any ring $R$ must have
an identity, although this may not be the identity of $R$.

\begin{remark}\cite[Proposition 2.2]{Lam2006}
Let $S\prec R$, with a complement $C$. If $1=e+f$ for some $e\in S$ and $f\in C$,
then $e$ is an identity of the ring $S$.
In particular, the decomposition $1=e+f$ is independent of the choice of the complement $C$, and $e, f$ are
complementary idempotents in $R$.
\end{remark}

Let $e, f$ be complementary idempotents in a ring $R$. Then
$R_e$ is called the {\it Peirce corner} of $R$ (arising from the idempotent $e$) such that
\begin{enumerate}
\item $R_e := eRe \prec R$, which is the largest subring (resp. corner) of $R$ having $e$ as identity element.  \\
\item $R_e\prec_r R$ (i.e., $R_e$ is rigid in $R$), with a unique complement
$$C_e := fRe \oplus eRf \oplus fRf = \{r \in R : ere = 0\},$$
\end{enumerate}
where $C_e$ is called the Peirce complement of $R_e$.

Recall that, for any ring $R$ (with identity), Jacobson's Lemma states that if $1-ab$ is invertible, then so is
$1-ba$ and
$$(1-ba)^{-1}=1+b(1-ab)^{-1}a.$$
The {\it group inverse} of a complex square matrix $A$ is the unique matrix $A^{\#}$ such that
\begin{equation*}\label{defin1}
  AA^{\#}A=A,~~~ A^{\#}AA^{\#}=A^{\#},~~~ AA^{\#}=A^{\#}A.
\end{equation*}
The {\it Drazin inverse} of  $A$ is the unique matrix $A^d$ such that
\begin{equation*}\label{defin1}
  AA^d=A^dA,~~~ A^dAA^d=A^d,~~~ A^{k}=A^{k+1}A^d,
\end{equation*}
 where $k$ is the smallest non-negative integer such that $\rank(A^{k}) = \rank(A^{k+1})$, called index of $A$ and denoted by $\ind(A)$.
We also denote $A^e=AA^d$ and $A^\pi=I-A^e$. If $\ind(A)=1$, then $A^{d}=A^{\#}$.

%

In this paper, our aim is, by the Peirce corner theory,
to establish a relationship between group inverses of a $2 \times 2$ block matrix and its generalized Schur complements,
and to find statements that are equivalent to the existence of group inverses of Peirce corner matrices of a $2 \times 2$ block matrix and its generalized Schur complements. Utilizing the equivalent statements, we derive some new results about Drazin inverses of the generalized Schur complements and the $2 \times 2$ block matrix, and generalize several results in the literature.

Throughout this paper,
for notational convenience, we denote
$$S=A-CD^dB, \quad s=A^eSA^e,\quad Z=D-BA^dC,\quad z=D^eZD^e,$$
$$M=\begin{bmatrix}
    A & C \\
    B & D
 \end{bmatrix},
\quad E=\begin{bmatrix}
    A^{e} & 0 \\
 0 & D^{e}
 \end{bmatrix},
 \quad M_E=EME=\begin{bmatrix}
    AA^{e} & A^{e}CD^{e} \\
             D^{e}BA^{e} & DD^{e}
 \end{bmatrix},$$
where $A\in\mathbb{C}^{n\times n}$, $B\in\mathbb{C}^{m\times n}$,
$C\in\mathbb{C}^{n\times m}$, and $D\in\mathbb{C}^{m\times m}$.
Here $\mathbb{C}^{m\times n}$ is the set of $m\times n$ complex
matrices.
We treat $\sum_{i=m}^n \ast=0$ whenever $m<n$, which is used in Section 3 and
Section 4. We denote by $I$ the identity matrix of proper size.

\section{Existences of group inverses}
In this section, we give statements that are equivalent to the existence of group
inverses of Peirce corner matrices of a $2 \times 2$
block matrix and its generalized Schur complements.

Here we cite a result which characterizes matrices with the same eigenprojection.
\begin{lemma}{\rm \cite[Theorem 2.1]{CGK2000}}\label{same eigenprojection}
  Let $A,B \in \mathbb{C}^{n\times n}$ be Drazin invertible. Then the following conditions are equivalent:
\begin{enumerate}
\item $A^\pi=B^\pi$;
\item $A^\pi B=BA^\pi,BA^\pi$ is nilpotent and $B+A^\pi$ is nonsingular;
\item $I+A^d(B-A)$ is nonsingular, $A^\pi B=BA^\pi$ and $BA^\pi$ is nilpotent;
\item $B^d=(I+A^d(B-A))^{-1}A^d$;
\item $B^d-A^d=A^d(A-B)B^d$.
\end{enumerate}
\end{lemma}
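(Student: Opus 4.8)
The plan is to reduce everything to the core--nilpotent block decomposition determined by the eigenprojection $A^e=AA^d$. Writing $\mathbb{C}^n=R(A^e)\oplus R(A^\pi)$, in this splitting one has $A=A_1\oplus A_2$ with $A_1$ invertible and $A_2$ nilpotent, and correspondingly $A^d=A_1^{-1}\oplus 0$, $A^e=I\oplus 0$, and $A^\pi=0\oplus I$. I would then establish the five equivalences through the single cycle $(1)\Rightarrow(2)\Rightarrow(3)\Rightarrow(4)\Rightarrow(5)\Rightarrow(1)$, so that all the heavy lifting is done by reading off block forms, with only one genuinely hard return step.

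The implication $(1)\Rightarrow(2)$ is immediate: if $A^\pi=B^\pi$ then $A^\pi B=B^\pi B=BB^\pi=BA^\pi$ since $B$ commutes with its own eigenprojection, while $BA^\pi=BB^\pi=B(I-BB^d)$ is exactly the nilpotent part of $B$, and in the core--nilpotent block form of $B$ one sees that $B+A^\pi=B+B^\pi$ is nonsingular. For $(2)\Rightarrow(3)$ I would exploit the block picture above. The hypothesis $A^\pi B=BA^\pi$ forces $B$ to be block diagonal, $B=B_{11}\oplus B_{22}$, relative to $R(A^e)\oplus R(A^\pi)$; nilpotency of $BA^\pi$ says precisely that $B_{22}$ is nilpotent; and, using the identity $I+A^d(B-A)=A^\pi+A^dB$, the nonsingularity of $B+A^\pi$ translates to invertibility of $B_{11}$, which in turn makes $A^\pi+A^dB=(A_1^{-1}B_{11})\oplus I$ nonsingular. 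The same decomposition $B=B_{11}\oplus B_{22}$, with $B_{11}$ invertible and $B_{22}$ nilpotent, is literally the core--nilpotent splitting of $B$, so one reads off $B^d=B_{11}^{-1}\oplus 0$ and $B^\pi=0\oplus I$.

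The step $(3)\Rightarrow(4)$ is then a direct computation from these block forms: $(A^\pi+A^dB)^{-1}A^d=((B_{11}^{-1}A_1)\oplus I)(A_1^{-1}\oplus 0)=B_{11}^{-1}\oplus 0=B^d$. The step $(4)\Rightarrow(5)$ is pure algebra: rewriting $(4)$ as $(I+A^d(B-A))B^d=A^d$ and expanding gives $B^d-A^d=A^d(A-B)B^d$, which is $(5)$.

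That leaves $(5)\Rightarrow(1)$, which is where the real difficulty lies: one must extract the structural information (commutation, nilpotency, nonsingularity, and ultimately the equality of projections) from a single matrix identity, with no block form handed to us in advance. My plan is to multiply $(5)$ on the left by $A^\pi$ and on the right by $B^\pi$, using $A^\pi A^d=0$ and $B^dB^\pi=0$, to deduce the relations $A^\pi B^d=0$ and $A^dB^\pi=0$; these say $R(B^d)\subseteq R(A^e)$ and $R(B^\pi)\subseteq R(A^\pi)$. Converting these inclusions into the reverse inclusions as well, by exploiting the symmetry of $(5)$ together with the complementary idempotents $A^e,A^\pi$ and $B^e,B^\pi$, and thereby forcing $A^\pi=B^\pi$, is the main obstacle and the step I expect to require the most care.
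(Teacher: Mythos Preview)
The paper does not prove this lemma; it is quoted from Castro-Gonz\'alez, Koliha and Wei \cite{CGK2000} and used as a black box in the proof of Theorem~\ref{equivalent}. So there is no proof in the present paper to compare against.

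That said, your plan via the core--nilpotent decomposition is correct and is the standard route (essentially what the original source does). The chain $(1)\Rightarrow(2)\Rightarrow(3)\Rightarrow(4)\Rightarrow(5)$ goes through exactly as you describe. Your worry about $(5)\Rightarrow(1)$ is overstated, however: once you have extracted $A^\pi B^d=0$ and $A^d B^\pi=0$ by left- and right-multiplying $(5)$ by $A^\pi$ and $B^\pi$, a plain dimension count finishes the job. From $A^\pi B^d=0$ you get $R(B^e)=R(B^d)\subseteq N(A^\pi)=R(A^e)$, and from $A^d B^\pi=0$ you get $R(B^\pi)\subseteq N(A^d)=R(A^\pi)$. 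Since $R(B^e)\oplus R(B^\pi)=\mathbb{C}^n=R(A^e)\oplus R(A^\pi)$, comparing dimensions forces both inclusions to be equalities. Two idempotents with the same range and the same complementary range (hence the same kernel) coincide, so $A^\pi=B^\pi$. No appeal to any hidden symmetry of $(5)$ is needed; the step you flagged as the ``main obstacle'' is in fact the shortest.
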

Now we can give our first main result.
\begin{theorem}\label{equivalent}
The following statements are equivalent:
\begin{enumerate}
  \item $M_E^\#$ exists, and $M_E^e=E$;
   \item  $s^\#$ exists, and $s^e=A^e$;
    \item $z^\#$ exists, and $z^e=D^e$;
    \item $A^\pi+s$ is invertible;
    \item $I+A^d(s-A)$ is invertible (or $I+A^d(S-I)A$ is invertible);
    \item $s^d=(I+A^d(s-A))^{-1}A^d$;
     \item $D^\pi+z$ is invertible;
    \item $I+D^d(z-D)$ is invertible (or $I+D^d(Z-I)D$ is invertible);
    \item $z^d=(I+D^d(z-D))^{-1}D^d$;
    \item $s^\#, z^\#$ exist, $s^\#=A^dCz^\#BA^d+A^d$, and $A^dCz^\#=s^\#CD^d$;
    \item $s^\#, z^\#$ exist, $z^\#=D^dBs^\#CD^d+D^d$, and $D^dBs^\#=z^\#BA^d$;
    \item $M_E^\#$ exists, and
$$M_E^\#=
\begin{bmatrix}
   A^d+A^d C z^\#  B A^d&-A^d C z^\# \\-z^\# B A^d& z^\#
\end{bmatrix}=
\begin{bmatrix}
s^\#& -s^\# C D^d\\- D^d B s^\# & D^d B s^\#  C D^d+D^d
\end{bmatrix}.$$
\end{enumerate}
 \end{theorem}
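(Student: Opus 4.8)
The plan is to pass to the corner ring $\mathcal{E}=E\,\mathbb{C}^{(n+m)\times(n+m)}\,E$, where $E$ is idempotent and serves as the identity. The guiding observation is that for an element $x$ of a ring with identity $u$, the conditions ``$x^\#$ exists and $xx^\#=u$'' are equivalent to ``$x$ is invertible.'' Applying this in $\mathcal{E}$, in $A^e\mathbb{C}^{n\times n}A^e$, and in $D^e\mathbb{C}^{m\times m}D^e$ respectively, I would first recast the three ``existence plus eigenprojection'' statements as invertibility statements: (1) says $M_E$ is invertible in $\mathcal{E}$, (2) says $s$ is invertible in $A^e\mathbb{C}^{n\times n}A^e$, and (3) says $z$ is invertible in $D^e\mathbb{C}^{m\times m}D^e$. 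Along the way I would record the elementary corner facts that $AA^e$ and $DD^e$ are the units of their respective corners with inverses $A^d$ and $D^d$, and that $s$ and $z$ are exactly the generalized Schur complements of the $(2,2)$ and $(1,1)$ blocks of $M_E$, since $s=AA^e-A^eCD^dBA^e$ and $z=DD^e-D^eBA^dCD^e$.

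The core of the argument is a Schur (Banachiewicz type) factorization carried out inside $\mathcal{E}$. I would write
\begin{equation*}
M_E=
\begin{bmatrix} A^e & A^eCD^d\\ 0 & D^e\end{bmatrix}
\begin{bmatrix} s & 0\\ 0 & DD^e\end{bmatrix}
\begin{bmatrix} A^e & 0\\ D^dBA^e & D^e\end{bmatrix},
\end{equation*}
whose outer factors are unipotent block-triangular, hence units of $\mathcal{E}$. Consequently $M_E$ is invertible in $\mathcal{E}$ if and only if $\mathrm{diag}(s,DD^e)$ is, which, since $DD^e$ is already a unit, happens exactly when $s$ is invertible in its corner; this gives (1)$\Leftrightarrow$(2). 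Inverting the factorization produces the $s$-form of the matrix in (12). The dual factorization using the $(1,1)$ pivot $AA^e$ gives (1)$\Leftrightarrow$(3) and the $z$-form in (12), so that (1)$\Leftrightarrow$(12) and the two displayed expressions for $M_E^\#$ coincide.

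To fill in the clusters around $s$ and $z$ I would invoke Lemma \ref{same eigenprojection} with the pair $(A,s)$ in the roles of $(A,B)$. Because $s=A^esA^e$ we have $A^\pi s=sA^\pi=0$, so the commutativity and nilpotency hypotheses appearing in the second and third listed conditions of that lemma hold automatically, while its first listed condition $A^\pi=s^\pi$ forces the nilpotent part of $s$, namely $ss^\pi=sA^\pi$, to vanish and is therefore equivalent to statement (2). Reading off the remaining conditions of the lemma then yields (2)$\Leftrightarrow$(4)$\Leftrightarrow$(5)$\Leftrightarrow$(6); the parenthetical variant in (5) follows because $A^\pi+A^d(s-A)$ and $A^\pi+A^d(S-I)A$ differ only by a strictly block-triangular term and so are invertible together. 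The dual application to the pair $(D,z)$ gives (3)$\Leftrightarrow$(7)$\Leftrightarrow$(8)$\Leftrightarrow$(9).

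Finally, for the cross relations the forward direction is immediate: equating the $(i,j)$ blocks of the two forms of $M_E^\#$ from (12) reads off the identities in (10) and in (11). The converse is the delicate point and the step I expect to be the main obstacle. Given (10), I would substitute $A^dCz^\#=s^\#CD^d$ into $s^\#=A^d+A^dCz^\#BA^d$ to obtain $s^\#=A^d+s^\#CD^dBA^d$, put $P=A^e-ss^\#$, note $A^es^\#=s^\#$ and $Ps^\#=0$, and left-multiply the last relation by $P$ to get $PA^d=0$, that is $ss^\#A^d=A^d$; since $A^e$ and $A^d$ have the same column space this forces $ss^\#=A^e$, which together with the assumed existence of $s^\#$ is precisely (2). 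The dual computation gives (11)$\Rightarrow$(3). The genuine difficulty throughout is the bookkeeping of Drazin identities needed to justify the corner arithmetic---verifying the factorization identities, the identification of corner invertibility with the eigenprojection conditions, and this last passage from the algebraic relations of (10) and (11) to the eigenprojection equalities.
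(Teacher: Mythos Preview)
Your plan is correct and follows essentially the same route as the paper: the Schur/Banachiewicz factorizations of $M_E$ inside the Peirce corner give (1)$\Leftrightarrow$(2)$\Leftrightarrow$(3)$\Leftrightarrow$(12), the block comparison of the two forms of $M_E^\#$ yields (10) and (11), the reverse implications (10)$\Rightarrow$(2) and (11)$\Rightarrow$(3) go by the same substitution-and-cancellation you describe, and Lemma~\ref{same eigenprojection} together with Jacobson's Lemma handles (4)--(9). One small imprecision: the matrices $I+A^d(s-A)=A^\pi+A^dSA^e$ and $I+A^d(S-I)A=A^\pi+A^dSA$ do \emph{not} literally differ by a strictly block-triangular term in the $\mathrm{ran}(A^e)\oplus\mathrm{ran}(A^\pi)$ decomposition (their $(1,1)$ blocks are $A^dSA^e$ and $A^dSA^e\cdot A$ respectively), but since $A$ is invertible on $\mathrm{ran}(A^e)$ the two $(1,1)$ blocks are invertible together, which is what you need.
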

\begin{proof}
We first show the implication (1)$\Rightarrow$(3) holds.
Note that we have the following two representations for the Peirce corner matrix $M_E$ of $M$.
\begin{align}\label{Msz1}
  \begin{bmatrix}
     AA^{e}  & A^{e}CD^{e} \\
     D^{e}BA^{e}  & DD^{e}
   \end{bmatrix}
 =
  \begin{bmatrix}
     A^{e}        & 0 \\
     D^{e}BA^{d} & D^{e}
   \end{bmatrix}
  \begin{bmatrix}
     AA^{e} & 0 \\
     0      & DD^{e}-D^{e}BA^{d}CD^{e}
   \end{bmatrix}
  \begin{bmatrix}
     A^{e} & A^{d}CD^{e} \\
     0     & D^{e}
   \end{bmatrix},
\end{align}
\begin{align}\label{Msz2}
 \begin{bmatrix}
     AA^{e}  & A^{e}CD^{e} \\
     D^{e}BA^{e}  & DD^{e}
   \end{bmatrix}
 =
  \begin{bmatrix}
     A^{e} & A^{e}CD^{d} \\
     0     & D^{e}
   \end{bmatrix}
  \begin{bmatrix}
     AA^{e}-A^{e}CD^{d}BA^{e} & 0 \\
     0                        & DD^{e}
   \end{bmatrix}
   \begin{bmatrix}
     A^{e}        & 0 \\
     D^{d}BA^{e} & D^{e}
   \end{bmatrix}.
\end{align}
Let $\mathbb{W}=\{M\in \mathbb{C}^{n\times n}\}$ and $\mathbb{W}_E=\{M_E\in \mathbb{C}^{n\times n}\}$.
We observe that all of the matrices above are contained in the Peirce corner $\mathbb{W}_E$ of $\mathbb{W}$ with identity  $E$.
Since $\mathbb{W}_E$ is a finite dimensional algebra over $\mathbb{C}$ with identity $E$, by $EM_E=M_EE=M_E$, we conclude that $\mathbb{W}_E$ is Dedekind finite (see \cite[Corollary 21.27]{Lam1991}), i.e., if $XY=E$ then $YX=E$ for the arbitrary $X,Y\in \mathbb{W}_E$.
Since
\begin{equation*}
\begin{bmatrix}
     A^{e}        & 0 \\
     -D^{e}BA^{d} & D^{e}
   \end{bmatrix}
  M_E
  \begin{bmatrix}
     A^{e} & -A^{d}CD^{e} \\
     0     & D^{e}
   \end{bmatrix}
=\begin{bmatrix}
     AA^{e} & 0 \\
     0     & D^{e}ZD^{e}
   \end{bmatrix}
\end{equation*}
and
\begin{equation*}
  \begin{bmatrix}
     AA^{e} & 0 \\
     0     & D^{e}ZD^{e}
   \end{bmatrix}
\times
\begin{bmatrix}
     A^{e} & A^{d}CD^{e} \\
     0     & D^{e}
   \end{bmatrix}
M_E^{\#}
\begin{bmatrix}
     A^{e}        & 0 \\
     D^{e}BA^{d} & D^{e}
   \end{bmatrix}
=E,
\end{equation*}
we conclude that
$\begin{bmatrix}
     AA^{e} & 0 \\
     0     & D^{e}ZD^{e}
   \end{bmatrix}^{\#}$ exists, and
$\begin{bmatrix}
     AA^{e} & 0 \\
     0     & D^{e}ZD^{e}
   \end{bmatrix}^e=E$.
Then, by the uniqueness of the group inverse, $z^\#$ exists and $z^e=D^e$.

Similarly, it can be proved  that  (1), (2), and (3) are equivalent.

To prove that the implications  (1)$\Rightarrow$(10) and (1)$\Rightarrow$(11) hold,  we combine the equivalence of (1), (2), and (3) to give

\begin{align*}
  &\begin{bmatrix}
     A^{e} & -A^{d}CD^{e} \\
     0     & D^{e}
   \end{bmatrix}\
   \begin{bmatrix}
     A^{d} & 0 \\
     0      &z^\#
   \end{bmatrix}
   \begin{bmatrix}
     A^{e}        & 0 \\
     -D^{e}BA^{d} & D^{e}
   \end{bmatrix}\\
=
  & \begin{bmatrix}
     A^{e}        & 0 \\
     -D^{d}BA^{e} & D^{e}
   \end{bmatrix}
  \begin{bmatrix}
     s^\# & 0 \\
     0                        & D^{d}
   \end{bmatrix}
\begin{bmatrix}
     A^{e} & -A^{e}CD^{d} \\
     0     & D^{e}
   \end{bmatrix},
\end{align*}
i.e.,
$$
\begin{bmatrix}
   A^d+A^d C z^\#  B A^d&-A^d C z^\# \\-z^\# B A^d& z^\#
\end{bmatrix}=
\begin{bmatrix}
s^\#& -s^\# C D^d\\- D^d B s^\# & D^d B s^\#  C D^d+D^d
\end{bmatrix},
$$
and so
\begin{gather*}
   s^\#=A^d+A^d C z^\#  B A^d,\qquad
   s^\# C D^d=A^d C z^\#,  \\
  D^d B s^\#= z^\# B A^d, \qquad
  z^\#=D^d B s^\# C D^d+D^d.
\end{gather*}

The implication (10)$\Rightarrow$(2) holds because $$s^\#=A^dCz^\#BA^d+A^d=s^\#CD^dBA^d+A^d=s^\#(A-S)A^d+A^d=s^\#-s^\#SA^d+A^d$$
implies $A^d=s^\#SA^d$. Hence, $A^e=A^dA=s^\#SA^dA=s^\#A^eSA^e=s^\#s=s^e$.

In a similar way, we can verify that (11)$\Rightarrow$(3). Thus, (1), (2), (3), (10), and (11) are equivalent.

By \eqref{Msz1}, \eqref{Msz2}, and simple calculation, we obtain the representation of $M^\#_E$ in (12)
and the equivalence between (12), and (1)--(3) holds.

The rest can be obtained by Lemma \ref{same eigenprojection} and Jacobson's Lemma.
%
%
%
\end{proof}

The equivalent statements in the above theorem distinctly provide  relationship between the existence of group inverses of Peirce corner matrices
of a $2\times 2$ block matrix and its generalized Schur complements.
Although some extra assumptions are imposed,
the theorem yields a new method in the research field
and can combine the two different research directions of the generalized inverses of the generalized Schur complements and the $2\times 2$ block matrix. The result
 is applied to Section 3 and Section 4,
and generalizes a few results in the literature.

We next conclude with a remark to provide for the application in Section 3.

\begin{remark}\label{rfilgroupinverse}
We note that \cite[Lemma 2.4]{FIL2015} states if $A^dCD^\pi
Z^dBA^d=\linebreak A^dCD^dZ^\pi BA^d,$
then $s^\#$ exists and $s^\#=A^d+A^dCZ^dBA^d$, and so $s^e=A^e$.
In this case, by Theorem \ref{equivalent},
$s^\#=A^d+A^dCz^\#BA^d$. Since the group inverse of $s$ is unique,
we can obtain if $A^dCD^\pi Z^dBA^d=A^dCD^dZ^\pi BA^d,$ then
$$A^dCZ^dBA^d=A^dCz^\#BA^d.$$ We can also conclude that if $D^dBA^\pi
S^dCD^d=D^dBA^dS^\pi CD^d,$
then $z^\#$ exists, $z^\#=D^d+D^dBS^dCD^d$ and $z^e=D^e$.
Similarly, if $D^dBA^\pi S^dCD^d=D^dBA^dS^\pi CD^d$, then
$$D^dBS^dCD^d=D^dBs^\#CD^d.$$
\end{remark}

Applying Jacobson's Lemma, we can obtain equivalent statements that are similar to those in Theorem \ref{equivalent}.
Furthermore, similar to Theorem \ref{equivalent}, by \cite[Theorem 2.3]{KS} or \cite[Lemma 1.1]{CGK2000}, we can verify a remark as follows.
\begin{remark}
(a) Let $Z=D-BA^dC$, $z_1=D^\pi ZD^\pi$,
 \begin{multline*}
~~M=\begin{bmatrix}
    A & C \\
    B & D
 \end{bmatrix},
 ~~G=\begin{bmatrix}
    A^{e} & 0 \\
 0 & D^\pi
 \end{bmatrix},
 ~~M_G=GMG=\begin{bmatrix}
    AA^{e} & A^{e}CD^\pi \\
             D^\pi BA^{e} & DD^\pi
 \end{bmatrix},
\end{multline*}
where $A\in\mathbb{C}^{n\times n}$, $B\in\mathbb{C}^{m\times n}$,
$C\in\mathbb{C}^{n\times m}$, and $D\in\mathbb{C}^{m\times m}$.
The following statements are equivalent:
\begin{enumerate}
  \item $M_G^\#$ exists, and $M_G^e=G$;
    \item $z_1^\#$ exists, and $z_1^e=D^\pi$;
     \item $D^e+z_1$ is invertible;
    \item $I+D^\pi(z_1-I)$ is invertible (or $I+D^\pi(Z-I)$ is invertible);
    \item $M_G^\#$ exists, and
$$M_G^\#=
\begin{bmatrix}
   A^d+A^d C z_1^\#  B A^d&-A^d C z_1^\# \\-z_1^\# B A^d& z_1^\#
\end{bmatrix}.
$$
\end{enumerate}

(b) Let $S=A-CD^dB$, $s_1=A^\pi SA^\pi$,
 \begin{multline*}
 ~~M=\begin{bmatrix}
    A & C \\
    B & D
 \end{bmatrix},
 ~~H=\begin{bmatrix}
    A^\pi & 0 \\
 0 & D^e
 \end{bmatrix},
 ~~M_H=HMH=\begin{bmatrix}
    AA^\pi & A^\pi CD^e \\
             D^e BA^\pi & DD^e
 \end{bmatrix},
\end{multline*}
where $A\in\mathbb{C}^{n\times n}$, $B\in\mathbb{C}^{m\times n}$,
$C\in\mathbb{C}^{n\times m}$, $D\in\mathbb{C}^{m\times m}$.
The following statements are equivalent:
\begin{enumerate}
  \item $M_H^\#$ exists, and $M_H^e=H$;
   \item  $s_1^\#$ exists, and $s_1^e=A^\pi$;
    \item $A^e+s_1$ is invertible;
    \item $I+A^\pi(s_1-I)$ is invertible (or $I+A^\pi(S-I)$ is invertible);
    \item $M_H^\#$ exists, and
$$M_H^\#=
\begin{bmatrix}
s_1^\#& -s_1^\# C D^d\\- D^d B s_1^\# & D^d B s_1^\#  C D^d+D^d
\end{bmatrix}.
$$
\end{enumerate}
\end{remark}

By direct calculation, we can obtain the following corollary.
\begin{corollary} \label{corszM}

(a) The following statements are equivalent:
\begin{enumerate}
\item $M_E^\#$ exists, and
$$M_E^\#=
\begin{bmatrix}
   A^d+A^d C z^d  B A^d&-A^d C z^d \\-z^d B A^d& z^d
\end{bmatrix};
$$
\item $z^\#$ exists, $A^dCz^\pi D^e=0$ and $D^ez^\pi BA^d=0$;
\item $z^\#$ exists, $A^dCz^e=A^dCD^e$ and $D^eBA^d=z^eBA^d$.
\end{enumerate}
(b) The following statements are equivalent:
\begin{enumerate}
\item $M_E^\#$ exists, and
$$M_E^\#=
\begin{bmatrix}
s^d& -s^d C D^d\\- D^d B s^d & D^d B s^d  C D^d+D^d
\end{bmatrix};
$$

\item $s^\#$ exists, $A^es^\pi CD^d=0$ and $D^dBs^\pi A^e=0$;
\item $s^\#$ exists, $A^eCD^d=s^eCD^d$ and $D^dBs^e=D^dBA^e$.
\end{enumerate}
\end{corollary}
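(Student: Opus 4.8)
The plan is to prove both parts by verifying the three defining identities of the group inverse directly against the proposed closed form, without passing through Theorem~\ref{equivalent}; the point of the corollary is precisely that the hypothesis $M_E^e=E$ used there is dropped, so the candidate is written with the Drazin inverse $z^d$ (resp.\ $s^d$) rather than $z^\#$. For part~(a) set $N=\begin{bmatrix} A^d+A^dCz^dBA^d & -A^dCz^d \\ -z^dBA^d & z^d \end{bmatrix}$; I want to show $N=M_E^\#$ exactly when (2) holds. First I would record that (2)$\Leftrightarrow$(3) is purely formal: since $z^e=zz^d$ lies in the Peirce corner of $D^e$ we have $D^ez^e=z^eD^e=z^e$, whence $z^\pi D^e=D^e-z^e=D^ez^\pi$, so that $A^dCz^\pi D^e=A^dC(D^e-z^e)$ and $D^ez^\pi BA^d=(D^e-z^e)BA^d$; the vanishing of these is literally $A^dCz^e=A^dCD^e$ and $z^eBA^d=D^eBA^d$.

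The core step is to expand the products $M_EN$ and $NM_E$ in $2\times2$ blocks, using only the Drazin relations $A^eA^d=A^d$, $AA^eA^d=A^e$, $A^eAA^e=AA^e$, $D^ez^d=z^d=z^dD^e$, and $DD^e=z+D^eBA^dCD^e$. I expect these to collapse, \emph{with no assumption on indices}, to
$$
M_EN=\begin{bmatrix} A^e & 0 \\ D^ez^\pi BA^d & z^e \end{bmatrix}, \qquad NM_E=\begin{bmatrix} A^e & A^dCz^\pi D^e \\ 0 & z^e \end{bmatrix}.
$$
Consequently the commutativity axiom $M_EN=NM_E$ is \emph{equivalent} to the two vanishing conditions in (2), in both directions at once. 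This is the observation that drives the whole proof: the off-diagonal blocks of $M_EN$ and $NM_E$ are exactly the quantities named in (2).

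It then remains to treat the outer axioms. Assuming (2), both products equal the idempotent $F=\begin{bmatrix} A^e & 0 \\ 0 & z^e \end{bmatrix}$, and I would check $FM_E=M_E$ and $FN=N$ block by block; the conditions $(D^e-z^e)BA^d=0$ and $A^dC(D^e-z^e)=0$ (together with $A^dA=A^e$) close up every entry except the bottom-right of $M_ENM_E$, where one meets $z^2z^d$ and needs $z^2z^d=z$. This is where I expect the \textbf{main subtlety} to sit: the requirement that $z^\#$ exist is not an auxiliary hypothesis one must add by hand, but is \emph{forced} by statement~(1). Indeed, running the bottom-right computation without an index assumption produces $z^2z^d+D^eBA^dCD^e$, which matches $DD^e=z+D^eBA^dCD^e$ only when $z^2z^d=z$, i.e.\ $\ind(z)\le1$; so $N=M_E^\#$ already entails the existence of $z^\#$, and conversely its existence makes all four blocks agree. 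Combining this with the commutativity step gives (1)$\Leftrightarrow$(2).

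Part~(b) is the mirror image under the symmetry $A\leftrightarrow D$, $B\leftrightarrow C$, $s\leftrightarrow z$, which interchanges the factorizations \eqref{Msz1} and \eqref{Msz2}: taking the candidate $\begin{bmatrix} s^d & -s^dCD^d \\ -D^dBs^d & D^dBs^dCD^d+D^d \end{bmatrix}$ and repeating the computation with $s=A^eSA^e$ in place of $z$, the off-diagonal blocks of $M_EN$ and $NM_E$ come out as $A^es^\pi CD^d$ and $D^dBs^\pi A^e$, and the same argument shows they vanish iff (1) holds while forcing $\ind(s)\le1$. Beyond this one structural remark, the only real work is the disciplined bookkeeping of the Drazin identities above, which I would carry out once and reuse throughout.
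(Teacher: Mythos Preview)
Your proposal is correct and follows the paper's own approach, which is stated simply as ``by direct calculation.'' You make the computation explicit and correctly isolate the one non-obvious point---that the bottom-right block of $M_ENM_E$ forces $z^2z^d=z$, so the clause ``$z^\#$ exists'' in (2) is a consequence of (1) rather than an extra hypothesis---which the paper leaves implicit.
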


We  note that $M_E^e=\begin{bmatrix}
  A^e&0\\0&z^e
\end{bmatrix}$ in Corollary \ref{corszM}(a).
If we assume $A^dCz^e=A^dCD^e$ and $D^eBA^d=z^eBA^d$, then
$A^dCD^\pi z^d BA^d=0=A^dCD^d z^\pi BA^d.$

In the following result, we present necessary and sufficient
conditions for the solvability of a matrix equation, and give the the expression of the general solution if the system is solvable.

\begin{theorem}
  Let $A\in\mathbb{C}^{n\times n}$, $B\in\mathbb{C}^{m\times n}$,
$C\in\mathbb{C}^{n\times m}$, and $D\in\mathbb{C}^{m\times m}$. The
system
  \begin{equation}\label{eq(A^{e},0)}
    \begin{bmatrix}
      A & C \\
      B & D
    \end{bmatrix}
 \begin{bmatrix}
    A^{e}X \\
    Z^{e}Y
  \end{bmatrix}
 =
  \begin{bmatrix}
    A^{e} \\
    0
  \end{bmatrix}
  \end{equation}
has a solution if and only if $A^\pi CZ^dBA^d=0$ and $Z^\pi
BA^d=0$. In this case, the general solution is
$$X=A^{d}+A^{d}CZ^{d}BA^{d}+A^\pi U \quad and\quad
    Y=-Z^{d}BA^{d}+Z^\pi V,$$ for arbitrary
$U\in\mathbb{C}^{n\times n}$ and $V\in\mathbb{C}^{m\times n}$.
\end{theorem}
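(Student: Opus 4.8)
The plan is to strip the block system down to its genuine unknowns and then force their values. First I would carry out the block multiplication in \eqref{eq(A^{e},0)}, which splits it into the two matrix equations
$$A\,A^{e}X+C\,Z^{e}Y=A^{e},\qquad B\,A^{e}X+D\,Z^{e}Y=0.$$
Then I would set $P:=A^{e}X$ and $Q:=Z^{e}Y$. Since $A^{e}$ and $Z^{e}$ are idempotent, the pairs $(P,Q)$ that arise this way are \emph{exactly} those with $A^{e}P=P$ and $Z^{e}Q=Q$; and for any such fixed $P,Q$ the equations $A^{e}X=P$, $Z^{e}Y=Q$ are solvable with general solutions $X=P+A^{\pi}U$ and $Y=Q+Z^{\pi}V$ ($U,V$ arbitrary), because the kernel of left multiplication by $A^{e}$ (resp.\ $Z^{e}$) is $\{A^{\pi}U\}$ (resp.\ $\{Z^{\pi}V\}$). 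So the whole problem reduces to solving $AP+CQ=A^{e}$ and $BP+DQ=0$ for $P,Q$ lying in these ranges.

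Next I would \emph{force} $P$ and $Q$. Left multiplying the first equation by $A^{d}$ and using $A^{d}A=A^{e}$ together with $A^{e}P=P$ gives $P+A^{d}CQ=A^{d}$, that is $P=A^{d}-A^{d}CQ$. Substituting into the second equation yields $(D-BA^{d}C)Q=-BA^{d}$, i.e.\ $ZQ=-BA^{d}$. Left multiplying this by $Z^{d}$ and using $Z^{d}Z=Z^{e}$ and $Z^{e}Q=Q$ forces $Q=-Z^{d}BA^{d}$, and back-substitution then gives $P=A^{d}+A^{d}CZ^{d}BA^{d}$. Thus both $P$ and $Q$ are uniquely pinned down by the system, which already produces the particular solution in the statement.

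Then I would read off the two stated conditions as the consistency residuals. Re-inserting $Q=-Z^{d}BA^{d}$ into $ZQ=-BA^{d}$ gives $Z^{e}BA^{d}=BA^{d}$, i.e.\ $Z^{\pi}BA^{d}=0$. For the second condition, the key is that multiplying $AP+CQ=A^{e}$ by $A^{d}$ discards information, so instead I would split this equation via $I=A^{e}+A^{\pi}$: its $A^{\pi}$-component reads $A^{\pi}AP+A^{\pi}CQ=0$, and since $A^{\pi}A\,A^{e}=A^{\pi}A^{e}A=0$ (using $AA^{e}=A^{e}A$ and $A^{\pi}A^{e}=0$) while $P=A^{e}P$, the first term vanishes, leaving $A^{\pi}CQ=0$, i.e.\ $A^{\pi}CZ^{d}BA^{d}=0$. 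Conversely, assuming both conditions, a direct check shows the forced $P,Q$ satisfy $AP+CQ=A^{e}$ (using $A^{\pi}CZ^{d}BA^{d}=0$) and $BP+DQ=0$ (using $Z^{\pi}BA^{d}=0$), so the system is consistent; combining with the first paragraph delivers exactly $X=A^{d}+A^{d}CZ^{d}BA^{d}+A^{\pi}U$ and $Y=-Z^{d}BA^{d}+Z^{\pi}V$.

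I expect the main obstacle to be precisely this complete use of the first equation: a naive left multiplication by $A^{d}$ recovers only the part of $AP+CQ=A^{e}$ living on the range of $A^{e}$ and would silently miss the condition $A^{\pi}CZ^{d}BA^{d}=0$. Handling the $A^{\pi}$-component correctly—via the splitting $I=A^{e}+A^{\pi}$ and the identity $A^{\pi}A\,A^{e}=0$—is therefore the delicate step. Everything else reduces to routine manipulations of the Drazin-inverse identities $A^{d}A=A^{e}$, $A^{e}A^{d}=A^{d}$, $Z^{d}Z=Z^{e}$, and $Z^{e}Z^{d}=Z^{d}$.
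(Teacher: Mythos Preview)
Your proposal is correct and follows essentially the same route as the paper: expand the block system into two equations, eliminate $A^{e}X$ via left multiplication by $A^{d}$, solve $ZQ=-BA^{d}$ for $Z^{e}Y$, back-substitute, and then read off the two consistency conditions by feeding the forced values back into the original equations. The only cosmetic differences are that the paper does not introduce the abbreviations $P,Q$, obtains $A^{\pi}CZ^{d}BA^{d}=0$ by direct substitution into $A(A^{e}X)+C(Z^{e}Y)=A^{e}$ rather than by your $A^{\pi}$-projection (the computations coincide), and cites \cite{BIG} for the general solution of $A^{e}X=\cdots$ and $Z^{e}Y=\cdots$ where you argue it from the kernel of the idempotents.
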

\begin{proof} If the system $\eqref{eq(A^{e},0)}$ has a solution, then
\begin{equation}\label{eqA^{e}}
  AA^{e}X+CZ^{e}Y=A^{e},
\end{equation}
and
\begin{equation}\label{eq0}
  BA^{e}X+DZ^{e}Y=0.
\end{equation}
By \eqref{eqA^{e}}, we have
\begin{equation}\label{A^{e}X}
  A^{e}X=A^{d}-A^{d}CZ^{e}Y
\end{equation}
which is substituted into \eqref{eq0} to have
$BA^{d}-BA^{d}CZ^{e}Y+DZ^{e}Y=0$, i.e.,
\begin{equation*}
  -BA^{d}=(D-BA^{d}C)Z^{e}Y=ZZ^{e}Y.
\end{equation*}
Then
\begin{equation}\label{ZeYCC}
  Z^{e}Y=-Z^{d}BA^{d},
\end{equation}
which is substituted into \eqref{A^{e}X} to have
\begin{equation}\label{AeXCC}
  A^{e}X=A^{d}+A^{d}CZ^{d}BA^{d}.
\end{equation} Therefore,
\begin{eqnarray*}
A^e&=&A(A^{e}X)+C(Z^{e}Y)=AA^{d}+AA^{d}CZ^{d}BA^{d}-CZ^{d}BA^{d}\\&=&A^e-A^\pi CZ^{d}BA^{d}
\end{eqnarray*} gives $A^\pi CZ^{d}BA^{d}=0$. We obtain
\begin{eqnarray*}0&=&B(A^{e}X)+D(Z^{e}Y)=BA^{d}+BA^{d}CZ^{d}BA^{d}-DZ^{d}BA^{d}
\\&=&BA^{d}-ZZ^{d}BA^{d}=Z^\pi BA^{d}.
\end{eqnarray*}
Following \cite[p. 52]{BIG}, we get the general solution of
$A^{e}X=A^{d}+A^{d}CZ^{d}BA^{d}$ as
$X=A^{d}+A^{d}CZ^{d}BA^{d}+A^\pi U$, for arbitrary
$U\in\mathbb{C}^{n\times n}$. Similarly, the general solution of the matrix
equation $Z^{e}Y=-Z^{d}BA^{d}$ is $Y=-Z^{d}BA^{d}+Z^\pi V$, for
arbitrary $V\in\mathbb{C}^{m\times n}$.

Suppose that $A^\pi CZ^dBA^d=0$ and $Z^\pi BA^d=0$. Set
$X=A^{d}+A^{d}CZ^{d}BA^{d}+A^\pi U$ and $Y=-Z^{d}BA^{d}+Z^\pi V$,
 for arbitrary
$U\in\mathbb{C}^{n\times n}$ and $V\in\mathbb{C}^{m\times n}$. We
can easily verify that $AA^{e}X+CZ^{e}Y=A^{e}$ and
$BA^{e}X+DZ^{e}Y=0$, i.e., the system $\eqref{eq(A^{e},0)}$ has
a solution.
\end{proof}

We next consider the group inverse of $s$ under the above equivalence restriction.
\begin{corollary}
Let $A\in\mathbb{C}^{n\times n}$, $B\in\mathbb{C}^{m\times n}$,
$C\in\mathbb{C}^{n\times m}$, $D\in\mathbb{C}^{m\times m}$, and
let $A^dCD^\pi Z^dBA^d=0$. If the system \eqref{eq(A^{e},0)} has a
solution, then $s^\#$ exists,
$$s^\#=A^{d}+A^{d}CZ^{d}BA^{d},$$
 and $s^e=A^e.$
\end{corollary}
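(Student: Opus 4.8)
The plan is to reduce everything to the criterion recorded in Remark~\ref{rfilgroupinverse}, namely the statement of \cite[Lemma 2.4]{FIL2015}: if $A^dCD^\pi Z^dBA^d=A^dCD^dZ^\pi BA^d$, then $s^\#$ exists, $s^\#=A^d+A^dCZ^dBA^d$, and $s^e=A^e$. Since the desired conclusion is verbatim the conclusion of that lemma, it suffices to check that the present hypotheses force the single equality $A^dCD^\pi Z^dBA^d=A^dCD^dZ^\pi BA^d$, after which the corollary follows immediately.

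First I would extract the consequences of solvability. Because the system \eqref{eq(A^{e},0)} is assumed to have a solution, the preceding theorem applies and yields the two identities $A^\pi CZ^dBA^d=0$ and $Z^\pi BA^d=0$. The second of these is the crucial one: left-multiplying it by $A^dCD^d$ gives at once $A^dCD^dZ^\pi BA^d=0$. Thus solvability of \eqref{eq(A^{e},0)} automatically produces the vanishing of the right-hand side of the identity I am aiming for.

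Next I would combine this with the standing hypothesis. The corollary assumes $A^dCD^\pi Z^dBA^d=0$ outright, and the previous step has just produced $A^dCD^dZ^\pi BA^d=0$. Hence both sides of $A^dCD^\pi Z^dBA^d=A^dCD^dZ^\pi BA^d$ vanish, so the equality holds trivially, and applying \cite[Lemma 2.4]{FIL2015} (as quoted in Remark~\ref{rfilgroupinverse}) delivers the existence of $s^\#$, the formula $s^\#=A^d+A^dCZ^dBA^d$, and the identity $s^e=A^e$, completing the argument. I do not expect a genuine obstacle here: once the reduction to Remark~\ref{rfilgroupinverse} is in place, the argument is a single left-multiplication. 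The only insight required — and the real content of the corollary — is that solvability of \eqref{eq(A^{e},0)} supplies precisely the factor $Z^\pi BA^d=0$ that, together with the direct hypothesis $A^dCD^\pi Z^dBA^d=0$, makes both terms in the identity of \cite[Lemma 2.4]{FIL2015} equal to zero.
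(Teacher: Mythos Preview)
Your argument is correct, but it follows a different route from the paper's own proof. You reduce the corollary to \cite[Lemma 2.4]{FIL2015} as quoted in Remark~\ref{rfilgroupinverse}: solvability of \eqref{eq(A^{e},0)} yields $Z^\pi BA^d=0$, whence $A^dCD^dZ^\pi BA^d=0$; together with the hypothesis $A^dCD^\pi Z^dBA^d=0$ the two sides of the condition in that lemma both vanish, and the conclusion follows at once. The paper instead argues directly: it takes the combination $\eqref{eqA^{e}}-CD^d\times\eqref{eq0}$ of the equations derived in the proof of the preceding theorem, substitutes the explicit expressions \eqref{ZeYCC} and \eqref{AeXCC}, uses the hypothesis $A^dCD^\pi Z^dBA^d=0$ to kill the remaining cross-term, and obtains $A^e=s(A^d+A^dCZ^dBA^d)$ explicitly; Dedekind finiteness of the Peirce corner $\mathbb{W}_{A^e}$ then upgrades this one-sided relation to the group inverse identity. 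Your reduction is shorter and neatly exploits the structure already recorded in Remark~\ref{rfilgroupinverse}, whereas the paper's proof is self-contained and showcases the Peirce-corner/Dedekind-finite technique that is a recurring theme of the article.
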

\begin{proof}
Note that $\eqref{eqA^{e}}-CD^{d}\times \eqref{eq0}$ yields
\begin{equation*}
  A^{e}=AA^{e}X+CZ^{e}Y-CD^{d}BA^{e}X-CD^{e}Z^{e}Y.
\end{equation*}
Then
\begin{equation*}
  A^{e}=AA^{e}X-A^{e}CD^{d}BA^{e}X+A^{e}CD^{\pi}Z^{e}Y,
\end{equation*}
which implies, by \eqref{ZeYCC} and $A^dCD^\pi Z^dBA^d=0$,
\begin{equation*}
  A^{e}=AA^{e}X-A^{e}CD^{d}BA^{e}X=sA^{e}X.
\end{equation*}
By \eqref{AeXCC},
$$A^{e}=s(A^{d}+A^{d}CZ^{d}BA^{d}).$$
Since $\mathbb{W}_{A^e}=\{A^eM^\prime A^e\in \mathbb{C}^{n\times
n}:M^\prime\in \mathbb{C}^{n\times n}\}$ is a finite dimensional
algebra over $\mathbb{C}$ with identity $A^e$, by
$A^eM_{A^e}=M_{A^e}A^e=M_{A^e}$, we conclude that $\mathbb{W}_{A^e}$ is
also Dedekind finite as in the proof of Theorem \ref{equivalent}.
Then
$$A^{e}=(A^{d}+A^{d}CZ^{d}BA^{d})s.$$
Thus $s^\#$ exists,
$$s^\#=A^{d}+A^{d}CZ^{d}BA^{d},$$
 and $s^e=A^e.$
\end{proof}

\section{applications to $S^d$ and $Z^d$}
Applying Theorem \ref{equivalent}, we obtain new results for the
Drazin inverses of the generalized Schur complements $S$ and $Z$ in this
section. As a consequence,
we generalize several results in the literature, and
recover some generalized Sherman-Morrison-Woodbury formulae.

We first need a existing lemma. In what follows, recall that $s=AA^dSAA^d$ and $z=DD^dZDD^d$.
\begin{lemma}\label{(lemma) P nilpotent}{\rm \cite{FIL2015}}
If $A^{\pi}CD^{d}B=0$, then $s=SAA^{d}$ and
$$
  S^{d}=s^{d}+\sum_{i=0}^{k-1}(s^{d})^{i+2}SA^{i}A^{\pi},
$$
where
 $k= \ind(A)$.
\end{lemma}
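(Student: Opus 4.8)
The plan is to use the hypothesis $A^{\pi}CD^{d}B=0$ to put $S$ into block upper-triangular form relative to the complementary idempotents $A^{e}=AA^{d}$ and $A^{\pi}=I-AA^{d}$, and then to read off $S^{d}$ from the known Drazin-inverse formula for a triangular block matrix. First I would establish the two structural identities. Since $A^{\pi}A=A-AA^{d}A=0$ and, by hypothesis, $A^{\pi}CD^{d}B=0$, I compute
\[
A^{\pi}SA^{e}=A^{\pi}AA^{e}-A^{\pi}CD^{d}BA^{e}=0,
\]
so that $SA^{e}=A^{e}SA^{e}=s$, which is exactly the first assertion $s=SAA^{d}$. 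Next, using $A^{\pi}CD^{d}BA^{\pi}=0$ together with $A^{\pi}AA^{\pi}=AA^{\pi}$ (because $A$ commutes with $A^{\pi}$), I obtain $A^{\pi}SA^{\pi}=AA^{\pi}=:N$. From $A^{k}A^{\pi}=A^{k}-A^{k+1}A^{d}=0$ it follows that $N^{k}=A^{k}A^{\pi}=0$, so $N$ is nilpotent of index at most $k=\ind(A)$; in particular every summand with $i\ge k$ vanishes, which justifies truncating the sum at $k-1$.

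Decomposing $S=(A^{e}+A^{\pi})S(A^{e}+A^{\pi})$ and discarding the term $A^{\pi}SA^{e}=0$, I get $S=s+R+N$ with $R:=A^{e}SA^{\pi}$, where the summands satisfy the annihilation relations $sN=Ns=Rs=NR=0$ (all reducing to $A^{e}A^{\pi}=0$) and live respectively in the corners cut out by $A^{e}$ and $A^{\pi}$. Thus $S$ is represented by the block upper-triangular matrix $\left[\begin{smallmatrix} s & R\\ 0 & N\end{smallmatrix}\right]$ for the splitting induced by $A^{e}$. Because the Drazin inverse is intrinsic, the diagonal block $s=A^{e}SA^{e}$ contributes its Drazin inverse $s^{d}$, computed interchangeably in $\mathbb{C}^{n\times n}$ or in the corner $A^{e}\mathbb{C}^{n\times n}A^{e}$. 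Applying the standard block-triangular Drazin formula and using that $N$ is nilpotent (so $N^{d}=0$ and $NN^{d}=0$, which annihilates every term except the leading sum) yields $S^{d}=s^{d}+\sum_{i=0}^{k-1}(s^{d})^{i+2}RN^{i}$. Finally I would simplify $RN^{i}=A^{e}SA^{\pi}(AA^{\pi})^{i}=A^{e}SA^{i}A^{\pi}$ and absorb the left factor $A^{e}$ via $(s^{d})^{i+2}A^{e}=(s^{d})^{i+2}$, giving precisely $S^{d}=s^{d}+\sum_{i=0}^{k-1}(s^{d})^{i+2}SA^{i}A^{\pi}$.

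The hard part is the invocation of the triangular Drazin-inverse formula: if one wants a self-contained argument rather than a citation, one must verify the three defining identities for the candidate $X=s^{d}+\sum_{i=0}^{k-1}(s^{d})^{i+2}RN^{i}$, and in particular the power condition $S^{\ell+1}X=S^{\ell}$ with $\ell=\ind(S)$. This forces a bookkeeping of $\ind(S)$ in terms of $\ind(s)$ and $k$, combined with repeated use of the annihilation relations $s^{d}N=Ns^{d}=Rs^{d}=0$ and $A^{e}A^{\pi}=0$ to telescope the cross terms arising from products of $s$, $R$, and $N$. These computations are routine, but they are where all the care is needed; the two preliminary identities $s=SA^{e}$ and $A^{\pi}SA^{\pi}=AA^{\pi}$ are what make the triangular structure available in the first place.
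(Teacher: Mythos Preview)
The paper does not prove this lemma; it is quoted from \cite{FIL2015} as an existing result, so there is no in-paper proof to compare against. Your argument---reducing $S$ to block upper-triangular form $\left[\begin{smallmatrix}s & R\\ 0 & N\end{smallmatrix}\right]$ with respect to the idempotent splitting $I=A^{e}+A^{\pi}$, then invoking the Meyer--Rose/Hartwig--Shoaf formula with a nilpotent lower block---is exactly the standard route and is sound in outline.

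There is one genuine slip you should correct. You assert ``$A^{\pi}A=A-AA^{d}A=0$'', but this is false whenever $\ind(A)\ge 2$: indeed $A^{\pi}A=AA^{\pi}$ is precisely the nilpotent $N$ you use later, and you yourself note it is nonzero with $N^{k}=0$. What you actually need for the first step is the weaker (and true) identity $A^{\pi}AA^{e}=0$, which follows from $A^{\pi}A^{e}=0$ together with the fact that $A$ commutes with $A^{e}$. With that correction the computation $A^{\pi}SA^{e}=A^{\pi}AA^{e}-A^{\pi}CD^{d}BA^{e}=0$ goes through, and the rest of your proof---the identifications $A^{\pi}SA^{\pi}=AA^{\pi}$, the annihilation relations, the triangular Drazin formula, and the final simplification $(s^{d})^{i+2}RN^{i}=(s^{d})^{i+2}SA^{i}A^{\pi}$---is correct as written.
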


By Lemma \ref{(lemma) P nilpotent} and Theorem \ref{equivalent},
we get the following result.

\begin{theorem}\label{(th)Sdbysg}
If any of (1)--(12) in Theorem \ref{equivalent} is valid, and $A^{\pi}CD^{d}B=0$, then
$$
  S^{d}=s^\#+\sum_{i=0}^{k-1}(s^\#)^{i+2}SA^{i}A^{\pi},
$$
where
$s^\#=A^dCz^\#BA^d+A^d$, and
 $k= \ind(A)$.
\end{theorem}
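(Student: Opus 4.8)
The plan is to combine Lemma \ref{(lemma) P nilpotent} with the equivalences established in Theorem \ref{equivalent}, so that the Drazin inverse $s^d$ appearing in the lemma can be replaced by the group inverse $s^\#$ with its explicit block form. The starting point is the observation that the validity of any one of the statements (1)--(12) in Theorem \ref{equivalent} already forces the group inverse $s^\#$ to exist: statement (2) asserts precisely that $s^\#$ exists with $s^e=A^e$, while statement (10) supplies the closed formula $s^\#=A^dCz^\#BA^d+A^d$. Thus under the blanket hypothesis of the theorem both of these facts are available at once.

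First I would record the single identification that links the two ingredients: since $s^\#$ exists, we have $\ind(s)\le 1$, and therefore the Drazin inverse of $s$ coincides with its group inverse, $s^d=s^\#$, exactly as noted in the introduction for the index-one case. Next I would invoke Lemma \ref{(lemma) P nilpotent}. The standing hypothesis $A^{\pi}CD^{d}B=0$ is precisely the condition demanded by that lemma, so it delivers $s=SAA^d$ together with the representation
$$S^{d}=s^{d}+\sum_{i=0}^{k-1}(s^{d})^{i+2}SA^{i}A^{\pi},\qquad k=\ind(A).$$

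Finally, substituting $s^d=s^\#$ into this representation yields
$$S^{d}=s^\#+\sum_{i=0}^{k-1}(s^\#)^{i+2}SA^{i}A^{\pi},$$
and recalling from Theorem \ref{equivalent}(10) that $s^\#=A^dCz^\#BA^d+A^d$ completes the stated form. I do not anticipate a substantial obstacle here; the argument is a direct fusion of two previously secured results. The only point that genuinely requires verification is that the \emph{existence} of the group inverse guarantees $s^d=s^\#$, and this is immediate from the definitions of the two generalized inverses once $\ind(s)\le 1$, which is itself built into the hypothesis via statement (2) of Theorem \ref{equivalent}.
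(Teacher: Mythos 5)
Your proposal is correct and follows the same route as the paper, which derives this theorem simply by combining Lemma \ref{(lemma) P nilpotent} (applicable because $A^{\pi}CD^{d}B=0$) with Theorem \ref{equivalent} (which, via the equivalence of (1)--(12), supplies the existence of $s^\#$ and the formula $s^\#=A^dCz^\#BA^d+A^d$, so that $s^d=s^\#$). Your explicit remark that existence of the group inverse forces $\ind(s)\le 1$ and hence $s^d=s^\#$ is the one small step the paper leaves implicit.
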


The following conclusion follows from Remark
\ref{rfilgroupinverse} and Theorem \ref{(th)Sdbysg}, because
$A^dCD^\pi Z^dBA^d=A^dCD^dZ^\pi BA^d$ implies that $s^\#$ exists
and $s^e=A^e$, which belongs in one of the equivalent statements
in Theorem \ref{equivalent}, and also $A^dCZ^dBA^d=A^dCz^\#BA^d.$

\begin{corollary}{\rm \cite[Theorem 2.5]{FIL2015}}\label{KHFILTH2.5}
  If $A^{\pi}CD^{d}B=0$ and $A^dCD^\pi Z^dBA^d=A^dCD^dZ^\pi BA^d$, then
$$
  S^{d}=A^{d}+A^{d}CZ^{d}BA^{d}+\sum_{i=0}^{k-1}(A^{d}+A^{d}CZ^{d}BA^{d})^{i+2}SA^{i}A^{\pi},
$$
or alternatively
\begin{multline*}
S^{d}=A^{d}+A^{d}CZ^{d}BA^{d}-\sum_{i=0}^{k-1}(A^{d}+A^{d}CZ^{d}BA^{d})^{i+1}A^{d}CZ^{d}BA^{i}A^{\pi}\\
 +\sum_{i=0}^{k-1}(A^{d}+A^{d}CZ^{d}BA^{d})^{i+1}A^{d}C(Z^{d}D^{\pi}-Z^{\pi}D^{d})BA^{i},
\end{multline*}
where
$k=\ind(A)$.
\end{corollary}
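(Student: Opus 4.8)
The plan is to read off the first displayed formula directly from the apparatus already in place, and then to transform it algebraically into the second. First I would invoke Remark~\ref{rfilgroupinverse}: the hypothesis $A^dCD^\pi Z^dBA^d=A^dCD^dZ^\pi BA^d$ guarantees that $s^\#$ exists, that $s^\#=A^d+A^dCZ^dBA^d$, and that $s^e=A^e$. This last fact is precisely statement~(2) of Theorem~\ref{equivalent}, so all of (1)--(12) there are available. Since the second hypothesis $A^\pi CD^dB=0$ is also assumed, Theorem~\ref{(th)Sdbysg} applies verbatim and gives
\[
S^d=s^\#+\sum_{i=0}^{k-1}(s^\#)^{i+2}SA^iA^\pi,\qquad s^\#=A^d+A^dCZ^dBA^d .
\]
Substituting the expression for $s^\#$ yields the first formula of the corollary with no further work.

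For the second formula set $w:=s^\#=A^d+A^dCZ^dBA^d$ and rewrite each summand as $w^{i+2}SA^iA^\pi=w^{i+1}\bigl(s^\#SA^iA^\pi\bigr)$, then simplify the inner factor. Writing $S=A-CD^dB$ and using that $A^e,A^\pi$ commute with every power of $A$ while $A^eA^\pi=0$, I would first verify $s^\#A\,A^iA^\pi=0$, so that $s^\#SA^iA^\pi=-\,s^\#CD^dBA^iA^\pi$. Next I would expand $s^\#CD^d$ by substituting $BA^dC=D-Z$ and collecting terms via $DD^d=D^e$ and $Z^dZ=Z^e$, which produces the clean identity $s^\#CD^d=A^dCZ^d-A^dC(Z^dD^\pi-Z^\pi D^d)$. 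Feeding this back in splits each summand into two pieces and leads to
\[
S^d=w-\sum_{i=0}^{k-1}w^{i+1}A^dCZ^dBA^iA^\pi+\sum_{i=0}^{k-1}w^{i+1}A^dC(Z^dD^\pi-Z^\pi D^d)BA^iA^\pi,
\]
which matches the target second formula except that its final sum carries $BA^iA^\pi$ rather than $BA^i$.

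The remaining step is to show that the discrepancy $\sum_{i=0}^{k-1}w^{i+1}A^dC(Z^dD^\pi-Z^\pi D^d)BA^iA^e$ vanishes. I would not attempt to use the hypothesis directly, since it is stated with $D^\pi,D^d$ on the left of $Z^d,Z^\pi$, whereas the expansion has produced the opposite ordering. Instead I would rewrite $A^dC(Z^dD^\pi-Z^\pi D^d)=A^dCZ^d-s^\#CD^d=A^dCZ^d-A^dCz^\#$, the last equality being $A^dCz^\#=s^\#CD^d$ from Theorem~\ref{equivalent}(10). It then suffices to prove $A^dCZ^dBA^iA^e=A^dCz^\#BA^iA^e$ for each $i$, and this I would obtain by right-multiplying the identity $A^dCZ^dBA^d=A^dCz^\#BA^d$ (supplied by Remark~\ref{rfilgroupinverse}) by $A^{i+1}$ and using $A^dA^{i+1}=A^iA^e$. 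Hence every summand of the discrepancy is zero and the second formula follows.

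The main obstacle is exactly this last reconciliation: the ordering mismatch between the hypothesis and the terms generated by the expansion means the stray $A^e$-factor cannot be cancelled by a bare appeal to $A^dCD^\pi Z^dBA^d=A^dCD^dZ^\pi BA^d$. The resolution routes through the compressed Schur complement $z$ via the two identities $A^dCz^\#=s^\#CD^d$ and $A^dCZ^dBA^d=A^dCz^\#BA^d$, and hinges on the observation that the latter is stable under right-multiplication by powers of $A$; everything else is routine bookkeeping with the idempotents $A^e,A^\pi$ and the relations $DD^d=D^e$, $Z^dZ=Z^e$.
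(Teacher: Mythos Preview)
Your derivation of the first formula is exactly the paper's argument: invoke Remark~\ref{rfilgroupinverse} to get $s^\#=A^d+A^dCZ^dBA^d$ with $s^e=A^e$, which is statement~(2) of Theorem~\ref{equivalent}, and then apply Theorem~\ref{(th)Sdbysg}.

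For the second formula the paper does not give an argument at all, since the corollary is quoted from \cite[Theorem~2.5]{FIL2015}; your algebraic reduction from the first formula is additional content. It is also correct: the key identity $s^\#CD^d=A^dCZ^d-A^dC(Z^dD^\pi-Z^\pi D^d)$ follows exactly as you indicate from $BA^dC=D-Z$ and $Z^\pi=I-Z^e$, and the discrepancy term is killed by combining $s^\#CD^d=A^dCz^\#$ from Theorem~\ref{equivalent}(10) with $A^dCZ^dBA^d=A^dCz^\#BA^d$ from Remark~\ref{rfilgroupinverse}, right-multiplied by $A^{i+1}$. The only cosmetic point worth tightening is your verification that $s^\#A\cdot A^iA^\pi=0$: this is true because $s^\#=s^\#A^e$ (from $s^e=A^e$), hence $s^\#A^{i+1}A^\pi=s^\#A^eA^{i+1}A^\pi=0$, which is a cleaner way to phrase it than computing $s^\#A$ explicitly.
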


We can see from \cite{FIL2015} how Corollary \ref{KHFILTH2.5} gives
and generalizes the Sherman-Morrison-Woodbury formula and some results
in \cite{Dijana2013,Dopazo2013,Shakoor2013,Wei2002}.

The following corollary, which is a dual version of Theorem \ref{(th)Sdbysg}, can be proved similarly.
\begin{corollary}\label{(COR)Qnilpotent}
 If any of (1)--(12) in Theorem \ref{equivalent} is valid, and $CD^{d}BA^{\pi}=0$, then $s=AA^dS$, and
\begin{align*}
    S^{d}=s^\#+\sum_{i=0}^{k-1}A^{\pi}A^{i}S(s^\#)^{i+2},
\end{align*}
where
$s^\#=A^dCz^\#BA^d+A^d$, and
 $k= \ind(A)$.
\end{corollary}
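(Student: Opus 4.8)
The plan is to obtain Corollary~\ref{(COR)Qnilpotent} as the left-handed (transpose) dual of Theorem~\ref{(th)Sdbysg}, following the same two-step pattern: first a Drazin formula for $S^d$ in terms of $s^d$, and then the upgrade $s^d=s^\#$ supplied by Theorem~\ref{equivalent}.

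First I would record a dual form of Lemma~\ref{(lemma) P nilpotent}: if $CD^dBA^\pi=0$, then $s=AA^dS$ and
$$S^d=s^d+\sum_{i=0}^{k-1}A^\pi A^i S(s^d)^{i+2},\qquad k=\ind(A).$$
The identity $s=AA^dS$ is immediate: expanding $s=A^eSA^e=A^eAA^e-A^eCD^dBA^e$, idempotency $(A^e)^2=A^e$ collapses the first term to $A^eA$, while $CD^dBA^\pi=0$ gives $CD^dBA^e=CD^dB$, so the second term equals $A^eCD^dB$; hence $s=A^e(A-CD^dB)=AA^dS$.

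For the Drazin formula I would argue by transposition. Setting $\hat A=A^T$, $\hat B=C^T$, $\hat C=B^T$, $\hat D=D^T$, the generalized Schur complement transforms as $\hat S=\hat A-\hat C\hat D^d\hat B=S^T$, the hypothesis $CD^dBA^\pi=0$ becomes exactly $\hat A^\pi\hat C\hat D^d\hat B=0$, and $\hat s=\hat A^e\hat S\hat A^e=s^T$. Applying Lemma~\ref{(lemma) P nilpotent} to the hatted data and transposing the resulting identity back---using $(A^d)^T=(A^T)^d$, $(A^\pi)^T=(A^T)^\pi$, and $\ind(A^T)=\ind(A)$---turns the right tail $\sum(\hat s^d)^{i+2}\hat S\hat A^i\hat A^\pi$ into $\sum A^\pi A^i S(s^d)^{i+2}$, yielding the displayed dual formula.

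Finally I would invoke Theorem~\ref{equivalent}: the validity of any of (1)--(12) forces $s^\#$ to exist, so $s$ has index at most one and $s^d=s^\#$, while statement~(10) identifies $s^\#=A^dCz^\#BA^d+A^d$. Substituting $s^d=s^\#$ into the dual formula gives the asserted expression for $S^d$. The main obstacle I anticipate is justifying the dual of Lemma~\ref{(lemma) P nilpotent}, since the cited source \cite{FIL2015} records only the right-handed version under $A^\pi CD^dB=0$. The transposition route disposes of this cleanly, provided one checks that every ingredient---the Drazin inverse, the eigenprojection, the index, and the order reversal inside the finite sum---is compatible with transposition; it is exactly this order reversal that converts the right-multiplication tail $SA^iA^\pi$ of Theorem~\ref{(th)Sdbysg} into the left-multiplication tail $A^\pi A^iS$ here. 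An equally valid but longer alternative is to repeat the nilpotent-perturbation computation of \cite{FIL2015} on the left; the transpose argument simply spares us that repetition.
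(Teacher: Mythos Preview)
Your proposal is correct and follows essentially the same route as the paper, which simply notes that Corollary~\ref{(COR)Qnilpotent} is the dual version of Theorem~\ref{(th)Sdbysg} and ``can be proved similarly.'' Your transposition argument is a clean way to make that duality precise: it manufactures the left-handed analogue of Lemma~\ref{(lemma) P nilpotent} without repeating the computation of \cite{FIL2015}, and the remaining appeal to Theorem~\ref{equivalent} for $s^d=s^\#=A^dCz^\#BA^d+A^d$ is exactly what the paper does in the proof of Theorem~\ref{(th)Sdbysg}.
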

We would like to point out that Corollary \ref{(COR)Qnilpotent} generalizes \cite[Theorem 3]{Dijana2013}, \cite[Theorem 2.2]{Dopazo2013}, \cite[Theorem 2.2]{Shakoor2013}, and \cite[Theorem 2.1]{Wei2002}.

Similarly,  we can obtain the following result.
\begin{corollary}\label{(CORO)Dpinilpotent}
If  any of (1)--(12) in Theorem \ref{equivalent} is valid, and $D^\pi BA^dC=0$, then $z=ZDD^{d}$ and
$$
  Z^{d}=z^\#+\sum_{i=0}^{k-1}(z^\#)^{i+2}ZD^{i}D^{\pi},
$$
where
$z^\#=D^dBs^\#CD^d+D^d$, and
 $k= \ind(D)$.
\end{corollary}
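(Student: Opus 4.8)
The plan is to reduce the statement to the dual of Lemma~\ref{(lemma) P nilpotent} together with the representation of $z^\#$ supplied by Theorem~\ref{equivalent}, exactly mirroring the proof of Theorem~\ref{(th)Sdbysg}.

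First I would establish the $Z$-version of Lemma~\ref{(lemma) P nilpotent}: if $D^\pi BA^dC=0$, then $z=ZDD^d$ and
$$
  Z^{d}=z^{d}+\sum_{i=0}^{k-1}(z^{d})^{i+2}ZD^{i}D^{\pi},\qquad k=\ind(D).
$$
This is obtained from Lemma~\ref{(lemma) P nilpotent} by the symmetry that interchanges the two diagonal blocks of $M$. Concretely, one applies Lemma~\ref{(lemma) P nilpotent} to the block matrix $\begin{bmatrix} D & B \\ C & A \end{bmatrix}$, whose Schur complement of the lower-right block $A$ is precisely $Z=D-BA^dC$. Under this relabeling $A$ is replaced by $D$, $C$ by $B$, $B$ by $C$, $S$ by $Z$, and $s$ by $z$, so the hypothesis $A^\pi CD^dB=0$ of Lemma~\ref{(lemma) P nilpotent} becomes $D^\pi BA^dC=0$ and its conclusion becomes the displayed $Z$-formula with $k=\ind(D)$.

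Next I would invoke Theorem~\ref{equivalent}. Since one of (1)--(12) is assumed valid, all of them hold; in particular statement (3) gives that $z^\#$ exists, whence $z^d=z^\#$. Substituting $z^d=z^\#$ into the formula above yields
$$
  Z^{d}=z^{\#}+\sum_{i=0}^{k-1}(z^{\#})^{i+2}ZD^{i}D^{\pi}.
$$
The representation $z^\#=D^dBs^\#CD^d+D^d$ is then read off directly from statement (11) of Theorem~\ref{equivalent}, which completes the identification.

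The only point requiring genuine care is the first step: one must verify that the block-transpose relabeling faithfully carries every object appearing in Lemma~\ref{(lemma) P nilpotent} to its $Z$-counterpart — in particular that the generalized inverses and spectral projections $D^d, D^\pi, D^e$ transform as expected, and that $z=D^eZD^e$ collapses to $ZDD^d$ under $D^\pi BA^dC=0$, just as $s=A^eSA^e$ collapses to $SAA^d$ under $A^\pi CD^dB=0$ in the original. Once this symmetry is confirmed, the remaining steps are immediate substitutions requiring no further computation.
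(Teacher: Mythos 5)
Your proposal is correct and follows exactly the route the paper intends: the paper presents this corollary as the dual of Theorem \ref{(th)Sdbysg} (``Similarly, we can obtain the following result''), i.e., the relabeled version of Lemma \ref{(lemma) P nilpotent} under $A\leftrightarrow D$, $B\leftrightarrow C$ combined with statements (3) and (11) of Theorem \ref{equivalent}. Your verification that the symmetry carries the hypothesis $A^\pi CD^dB=0$ to $D^\pi BA^dC=0$ and collapses $z=D^eZD^e$ to $ZDD^d$ is exactly the check the paper leaves implicit.
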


\section{applications to $M^d$}
Under new conditons, using Theorem \ref{equivalent}, we present
several representations for the Drazin inverse of a $2\times 2$
block matrix $M$ in terms of its blocks and also elements $z$ and
$s$ which include corresponding generalized Schur complements.

We cite one result as follows.
\begin{lemma}\label{P+Q}\rm \cite[Theorem 2.1]{Hartwig(2001)}
Let $P$ and $Q\in \mathbb{C}^{n\times n}$. If $PQ=0$, then
$$
  (P+Q)^{d}=Q^{\pi}\sum_{i=0}^{t-1}Q^{i}(P^{d})^{i+1}+\sum_{i=0}^{s-1}(Q^{d})^{i+1}P^{i}P^{\pi},
$$
where $s= \ind(P)$ and $t=\ind(Q)$.
\end{lemma}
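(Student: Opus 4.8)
The plan is to set $T=P+Q$, take $X$ to be the claimed right-hand side, and verify directly that $X$ satisfies the three defining relations of the Drazin inverse recorded in the introduction, namely $TX=XT$, $XTX=X$ and $T^{k}=T^{k+1}X$ with $k=\ind(T)$; uniqueness of the Drazin inverse then finishes the proof. Writing $X=X_{1}+X_{2}$ with $X_{1}=Q^{\pi}\sum_{i=0}^{t-1}Q^{i}(P^{d})^{i+1}$ and $X_{2}=\sum_{i=0}^{s-1}(Q^{d})^{i+1}P^{i}P^{\pi}$, the first task is to extract the consequences of the hypothesis $PQ=0$. From $P^{d}=(P^{d})^{2}P$ I get $P^{d}Q=0$, hence $(P^{d})^{i}Q=0$ for $i\ge 1$, $P^{e}Q=0$ and $P^{\pi}Q=Q$. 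The less obvious but crucial companion fact is $PQ^{d}=0$: since $Q^{d}=Q(Q^{d})^{2}$, one has $PQ^{d}=(PQ)(Q^{d})^{2}=0$, and therefore $P(Q^{d})^{i}=0$ for all $i\ge 1$. I would also note that the two sums are really infinite sums in disguise, because $Q^{\pi}Q^{i}=0$ for $i\ge t$ and $P^{i}P^{\pi}=0$ for $i\ge s$ (the nilpotent parts $Q^{\pi}Q$ and $P^{\pi}P$ have indices $t$ and $s$); the freedom to extend the ranges is what makes the bookkeeping manageable.

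With these identities in hand, commutativity falls out cleanly. The annihilations $P^{d}Q=0$ and $PQ^{d}=0$ collapse the dangerous cross-terms — indeed $PX_{2}=0$ and $X_{1}Q=0$ — so that both $TX$ and $XT$ reduce to $P^{e}+Q^{e}-Q^{e}P^{e}$ together with the two matching series $\sum_{i\ge 1}Q^{\pi}Q^{i}(P^{d})^{i}$ and $\sum_{i\ge 1}(Q^{d})^{i}P^{i}P^{\pi}$; comparing the two computations gives $TX=XT$. The relation $XTX=X$ is handled the same way, and is equivalent to checking that the common value $TX$ is idempotent, which I would read off from the reduced form above.

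The main obstacle is the index relation $T^{k}=T^{k+1}X$, equivalently the assertion that $N:=(I-TX)T$ is nilpotent. This is where the precise upper limits $t-1$ and $s-1$ and the one-sidedness of $PQ=0$ genuinely enter: I must show that $N$ captures exactly the nilpotent part of $T$ and that $N^{m}=0$ for suitable $m$ (one checks $m=s+t$ suffices). I would prove this by computing $I-TX$ explicitly from the reduced expression and then using repeatedly that $P^{\pi}P$ is nilpotent of index $s$, that $Q^{\pi}Q$ is nilpotent of index $t$, and that $PQ=0$ forces every product in which a factor $Q$ stands immediately to the right of a factor $P$ to vanish; these facts telescope the powers of $N$ down to zero. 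A more structural alternative would be to pass to a basis adapted to the core–nilpotent decomposition of $P$, in which $PQ=0$ renders $T$ block lower triangular, and then invoke the known Drazin formula for block triangular matrices; I prefer the direct verification here, since it is self-contained and avoids first having to establish the block-triangular formula.
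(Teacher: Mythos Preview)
The paper does not prove this lemma at all; it is quoted from \cite{Hartwig(2001)} and used as a black box. So there is no ``paper's own proof'' to compare against.

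Your direct-verification strategy is the standard one and is sound. The key annihilations $P^{d}Q=0$, $PQ^{d}=0$, $P^{\pi}Q=Q$, $PQ^{\pi}=P$ are derived correctly, and your computation that both $TX$ and $XT$ collapse to $P^{e}+Q^{e}-Q^{e}P^{e}$ plus the two residual series is accurate. For the index relation, your reduction to the nilpotence of $N=(I-TX)T$ is legitimate because $TX=XT$ forces $I-TX$ to be an idempotent commuting with $T$, so $T^{m}(I-TX)=\bigl(T(I-TX)\bigr)^{m}$; the bound $m=s+t$ then follows from $I-TX=Q^{\pi}P^{\pi}-(\text{series})$ together with $PQ=0$ and the nilpotence of $PP^{\pi}$ and $QQ^{\pi}$. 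The block-triangular alternative you mention (pass to a basis adapted to the core--nilpotent decomposition of $P$, where $PQ=0$ makes $T$ block lower triangular) is in fact closer to how Hartwig--Wang--Wei organise their original proof, but your self-contained verification is perfectly acceptable and arguably cleaner for a reader who does not want to import the block-triangular Drazin formula.
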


In the case that one of equivalent statements of Theorem
\ref{equivalent} holds, $ACD^\pi=0$ and $DBA^\pi=0$, we obtain the
following representation for the Drazin inverse of $M$.

\begin{theorem}\label{applMd-1}
 If any of (1)--(12) of Theorem \ref{equivalent} is valid, $ACD^\pi=0$, and $DBA^\pi=0$, then
\begin{eqnarray}M^d&=&\left(\begin{bmatrix}
    (CBA^\pi)^\pi & 0 \\
             0 & (BCD^\pi)^\pi
 \end{bmatrix}\right. \nonumber\\
 &-&\left.\sum_{m=1}^{t-1}\begin{bmatrix}
             0 & (CBA^\pi)^dCD^\pi \\
             BA^\pi (CBA^\pi)^d& 0\end{bmatrix}^m\begin{bmatrix}
    A^mA^\pi & 0 \\
             0 & D^mD^\pi
 \end{bmatrix}\right) \nonumber\\
&\times&\sum_{i=0}^{r-1}\begin{bmatrix}
    AA^\pi & CD^\pi \\
             BA^\pi & DD^\pi
 \end{bmatrix}^{i}\left(I+\begin{bmatrix}
    0 & A^\pi CD^{e} \\
             D^\pi BA^{e}& 0
 \end{bmatrix}P_1^\#\right)(P_1^\#)^{i+1} \nonumber\\
&+&\left(\begin{bmatrix}
             0 & (CBA^\pi)^dCD^\pi \\
             BA^\pi (CBA^\pi)^d& 0\end{bmatrix}\right. \nonumber\\
&+&\left.\sum_{n=1}^{t-1}\begin{bmatrix}
             0 & (CBA^\pi)^dCD^\pi \\
             BA^\pi (CBA^\pi)^d& 0\end{bmatrix}^{n+1}\begin{bmatrix}
    A^nA^\pi & 0 \\
             0 & D^nD^\pi
 \end{bmatrix}\right) \nonumber \\
&\times&\left(\begin{bmatrix}
    A^\pi & 0\\
             0& D^\pi
 \end{bmatrix}-\begin{bmatrix}
    0 & A^\pi CD^{e} \\
             D^\pi BA^{e}& 0
 \end{bmatrix}P_1^\#\right), \label{Md}
\end{eqnarray}
where $t=\max\{\ind(A),\ind(D)\}$, $r=\ind\left(\begin{bmatrix}
    AA^\pi & CD^\pi \\
             BA^\pi & DD^\pi
 \end{bmatrix}\right)$, and
$$P_1^\#=
\begin{bmatrix}
   A^d+A^d C z^\#  B A^d&-A^d C z^\# \\-z^\# B A^d& z^\#
\end{bmatrix}=
\begin{bmatrix}
s^\#& -s^\# C D^d\\- D^d B s^\# & D^d B s^\#  C D^d+D^d
\end{bmatrix}.
$$
 \end{theorem}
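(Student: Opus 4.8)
The plan is to obtain $M^d$ by reducing everything to the already-available data $P_1^\#=M_E^\#$, via two nested applications of Hartwig's additive formula (Lemma~\ref{P+Q}); the hypotheses $ACD^\pi=0$ and $DBA^\pi=0$ are exactly what is needed to manufacture the one-sided products of zero that Lemma~\ref{P+Q} requires. First I would record the consequences of the hypotheses: multiplying $ACD^\pi=0$ on the left by $A^d$ gives $A^eCD^\pi=0$, hence $A^\pi CD^\pi=CD^\pi$, and dually $D^eBA^\pi=0$ and $D^\pi BA^\pi=BA^\pi$. Writing $I=E+(I-E)$ and invoking these identities, the corner $EM(I-E)$ collapses to $0$, so $M=ME+M(I-E)$ with $M(I-E)=L$, giving the outer splitting
\[
M=ME+L,\qquad L=(I-E)M(I-E)=\begin{bmatrix}AA^\pi&CD^\pi\\BA^\pi&DD^\pi\end{bmatrix},
\]
and the single cancellation $E(I-E)=0$ yields $(ME)L=0$.

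Second, I would apply Lemma~\ref{P+Q} to $M=ME+L$ with $(ME)L=0$, obtaining
\[
M^d=L^\pi\sum_{i=0}^{\ind L-1}L^i\big((ME)^d\big)^{i+1}+\sum_{i=0}^{\ind(ME)-1}(L^d)^{i+1}(ME)^i(ME)^\pi .
\]
To evaluate the $ME$ data I split $ME=P_1+N$ with $P_1=M_E$ and $N=(I-E)ME$; here $E(I-E)=0$ gives $P_1N=0$ and $N^2=0$, $EN=0$ gives $P_1^\#N=0$, and by hypothesis (statement (1) of Theorem~\ref{equivalent}) $P_1$ is group invertible with $P_1^e=E$. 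A short computation (or a further use of Lemma~\ref{P+Q}) then gives $(ME)^d=P_1^\#+N(P_1^\#)^2$, whence $\ind(ME)\le1$, $(ME)^\pi=(I-E)-NP_1^\#$, and $\big((ME)^d\big)^{i+1}=(I+NP_1^\#)(P_1^\#)^{i+1}$. Substituting these collapses the second outer sum to its $i=0$ term $L^d\big((I-E)-NP_1^\#\big)$ and turns the first outer sum into $L^\pi\sum_{i=0}^{r-1}L^i(I+NP_1^\#)(P_1^\#)^{i+1}$, with $r=\ind L$.

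Third, for $L^d$ and $L^\pi$ I would use the inner decomposition $L=L_1+L_2$ with $L_1=\begin{bmatrix}AA^\pi&0\\0&DD^\pi\end{bmatrix}$ nilpotent of index $t=\max\{\ind A,\ind D\}$ and $L_2=\begin{bmatrix}0&CD^\pi\\BA^\pi&0\end{bmatrix}$; the hypotheses give $L_1L_2=0$, since $AA^\pi CD^\pi=ACD^\pi=0$ and $DD^\pi BA^\pi=DBA^\pi=0$. A third application of Lemma~\ref{P+Q}, using $L_1^d=0$ and $L_1^\pi=I$, yields $L^d=\sum_{i=0}^{t-1}(L_2^d)^{i+1}L_1^i$ and $L^\pi=L_2^\pi-\sum_{m=1}^{t-1}(L_2^d)^mL_1^m$. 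Finally I would compute $L_2^d$ and $L_2^\pi$ from the block‑antidiagonal formula, using $D^\pi BA^\pi=BA^\pi$ and $A^\pi CD^\pi=CD^\pi$ to rewrite $CD^\pi BA^\pi=CBA^\pi$ and $BA^\pi CD^\pi=BCD^\pi$, together with the standard identity $Y(XY)^d=(YX)^dY$; this produces exactly the blocks $(CBA^\pi)^dCD^\pi$, $BA^\pi(CBA^\pi)^d$ and the projections $(CBA^\pi)^\pi$, $(BCD^\pi)^\pi$ appearing in the statement. Assembling the three layers gives the asserted formula \eqref{Md}.

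The hard part will be the bookkeeping in the second and third steps: correctly tracking the nilpotent contributions $N$ and $L_1^i$ through the powers and collapsing them into the compact factors $I+NP_1^\#$ and $(I-E)-NP_1^\#$, and verifying the index facts $\ind(ME)\le1$ and $\ind(L_1)=t$ that truncate the Hartwig sums to the ranges $r-1$ and $t-1$. By contrast, all the orthogonality relations needed ($P_1N=0$, $N^2=0$, $(ME)L=0$, $L_1L_2=0$, $P_1^\#N=0$) reduce to the single cancellation $E(I-E)=0$ together with the four consequences $A^eCD^\pi=D^eBA^\pi=0$, $A^\pi CD^\pi=CD^\pi$, $D^\pi BA^\pi=BA^\pi$ of the hypotheses, so once these are in hand the remainder is careful but routine algebra.
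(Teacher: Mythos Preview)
Your proposal is correct and follows essentially the same route as the paper's proof: the outer splitting $M=ME+L$ is exactly the paper's $M=P+Q$, your $N=(I-E)ME$ is the paper's $P_2$, your $L_1,L_2$ are the paper's $Q_1,Q_2$, and in each case the same application of Lemma~\ref{P+Q} is made. Your framing via the idempotent $E$ and the single cancellation $E(I-E)=0$ packages the orthogonality checks ($PQ=0$, $P_1P_2=0$, $P_2^2=0$) a bit more cleanly than the paper's block-by-block verification, and your explicit statement $\ind(ME)\le 1$ is what the paper records as $P^iP^\pi=0$ for $i\ge 1$; otherwise the arguments coincide.
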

\begin{proof} We can write $M=P+Q$, where
$$P=\begin{bmatrix}
    AA^{e} & CD^{e} \\
             BA^{e} & DD^{e}
 \end{bmatrix}\quad{\rm and}\quad Q=\begin{bmatrix}
    AA^\pi & CD^\pi \\
             BA^\pi & DD^\pi
 \end{bmatrix}.$$
Since $PQ=0$, then, by Lemma \ref{P+Q},
$$M^d=Q^{\pi}\sum_{i=0}^{\ind(Q)-1}Q^{i}(P^{d})^{i+1}+\sum_{i=0}^{\ind(P)-1}(Q^{d})^{i+1}P^{i}P^{\pi}.$$
Suppose that $P=P_1+P_2$, where
$$P_1=\begin{bmatrix}
    AA^{e} & A^{e}CD^{e} \\
             D^{e}BA^{e} & DD^{e}
 \end{bmatrix}\quad{\rm and}\quad P_2=\begin{bmatrix}
    0 & A^\pi CD^{e} \\
             D^\pi BA^{e}& 0
 \end{bmatrix}.$$ Then $P_1P_2=0$ and $P_2^2=0$. Using Lemma \ref{P+Q}, we get
$$P^d=P_1^\#+P_2(P_1^\#)^2=(I+P_2P_1^\#)P_1^\#,$$ where the expression for $P_1^\#$ follows by Theorem \ref{equivalent}.
By $P_1P_1^\#=\begin{bmatrix}
    A^{e} & 0 \\
 0 & D^{e}
 \end{bmatrix}$, we observe that, for \ $i=1,2,\dots$,
$$P^iP^\pi=\begin{bmatrix}
    AA^{e} & CD^{e} \\
             BA^{e} & DD^{e}
 \end{bmatrix}^{i}\left(\begin{bmatrix}
    A^\pi & 0\\
             0& D^\pi
 \end{bmatrix}-\begin{bmatrix}
    0 & A^\pi CD^{e} \\
             D^\pi BA^{e}& 0
 \end{bmatrix}P_1^\#\right)=0.$$
For $$Q_1=\begin{bmatrix}
    AA^\pi & 0 \\
             0 & DD^\pi
 \end{bmatrix}\quad{\rm and}\quad Q_2=\begin{bmatrix}
    0 & CD^\pi \\
             BA^\pi & 0
 \end{bmatrix},$$ we have $Q=Q_1+Q_2$, $Q_1Q_2=0$ and $Q_1$ is nilpotent. By \cite[Theorem 2.1]{COD},
$$Q_2^{d}=\begin{bmatrix}
             0 & (CBA^\pi)^dCD^\pi \\
             BA^\pi (CBA^\pi)^d& 0\end{bmatrix}.$$ Applying  Lemma \ref{P+Q} again, we obtain
$$Q^d=\sum_{n=0}^{\ind(Q_1)-1}(Q_2^{d})^{n+1}Q_1^{n}$$ and so
$$Q^\pi=Q_2^\pi-\sum_{m=1}^{\ind(Q_1)-1}(Q_2^d)^mQ_1^m.$$
By direct computation, we obtain \eqref{Md}.
\end{proof}

As a consequence of Theorem \ref{applMd-1}, we obtain another expressions for $M^d$.

\begin{corollary}\label{cor43} Let $t$, $r$ and $P_1^\#$ be defined as in Theorem \ref{applMd-1}.

(a) If $AC=0$ and $DB=0$, then
\begin{eqnarray}M^d&=&\left(\begin{bmatrix}
    (CBA^\pi)^\pi & 0 \\
             0 & (BCD^\pi)^\pi
 \end{bmatrix}\right. \nonumber\\
 &-&\left.\sum_{m=1}^{t-1}\begin{bmatrix}
             0 & (CBA^\pi)^dCD^\pi \\
             B(CBA^\pi)^d& 0\end{bmatrix}^m\begin{bmatrix}
    A^mA^\pi & 0 \\
             0 & D^mD^\pi
 \end{bmatrix}\right)\nonumber\\
&\times&\sum_{i=0}^{r-1}\begin{bmatrix}
    AA^\pi & CD^\pi \\
             BA^\pi & DD^\pi
 \end{bmatrix}^{i}\begin{bmatrix}
    (A^d)^{i+1} & C(D^d)^{i+2} \\
            B(A^d)^{i+2}& (D^d)^{i+1}
 \end{bmatrix} \nonumber \\
&+&\begin{bmatrix}
             -(CBA^\pi)^dCBA^d & (CBA^\pi)^dCD^\pi \\
             B(CBA^\pi)^d& -B(CBA^\pi)^dCD^d\end{bmatrix} \nonumber \\
&+&\sum_{n=1}^{t-1}\begin{bmatrix}
             0 & (CBA^\pi)^dCD^\pi \\
             B(CBA^\pi)^d& 0\end{bmatrix}^{n+1}\begin{bmatrix}
    A^nA^\pi & 0 \\
             0 & D^nD^\pi \label{Md2}
 \end{bmatrix}.
\end{eqnarray}

(b) If any of (1)--(12) in Theorem \ref{equivalent} is valid, $CD^\pi=0$ and $BA^\pi=0$, then
\begin{equation}M^d=\sum_{i=0}^{t-1}\begin{bmatrix}
    AA^\pi & 0 \\
             0 & DD^\pi
 \end{bmatrix}^{i}\left(I+\begin{bmatrix}
    0 & A^\pi C \\
             D^\pi B& 0
 \end{bmatrix}P_1^\#\right)(P_1^\#)^{i+1}. \label{Md3}
\end{equation}
 \end{corollary}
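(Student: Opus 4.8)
The plan is to obtain both parts directly from Theorem~\ref{applMd-1} by feeding in the stronger hypotheses and simplifying the resulting block products; no new tools are required. The first step is to record the elementary consequences of the assumptions. For part (a), from $AC=0$ one gets $A^dC=(A^d)^2AC=0$, whence $A^\pi C=C$ and $A^iC=0$ for $i\ge1$; symmetrically $DB=0$ gives $D^dB=0$, $D^\pi B=B$ and $D^iB=0$ for $i\ge1$. For part (b), the hypotheses $CD^\pi=0$ and $BA^\pi=0$ are simply the identities $CD^e=C$ and $BA^e=B$.

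For part (a) I would first check that it really falls under Theorem~\ref{applMd-1}, even though statements (1)--(12) are not listed as hypotheses. Indeed $A^dC=0$ forces $Z=D-BA^dC=D$ and $D^dB=0$ forces $S=A-CD^dB=A$, so $s=A^eSA^e=AA^e$, and a short computation gives $s^\#=A^d$ with $s^e=A^e$. This is exactly statement (2) of Theorem~\ref{equivalent}, so all of (1)--(12) hold automatically; since $AC=0$ and $DB=0$ also yield $ACD^\pi=DBA^\pi=0$, Theorem~\ref{applMd-1} applies. Substituting $A^dC=0$ and $D^dB=0$ into the formula for $P_1^\#$ collapses it to the block-diagonal matrix with diagonal blocks $A^d$ and $D^d$, after which the relations $A^\pi C=C$, $D^\pi B=B$, $A^eA^d=A^d$, $D^eD^d=D^d$ give $P_2P_1^\#=\bigl[\begin{smallmatrix}0&CD^d\\BA^d&0\end{smallmatrix}\bigr]$; then $(I+P_2P_1^\#)(P_1^\#)^{i+1}$ is precisely the middle matrix appearing in \eqref{Md2}.

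The remaining work in part (a) is the two block reductions. Writing $N=CBA^\pi$, the assumption $AC=0$ gives $AN=0$, hence $A^eN^d=0$ (using $N^d=N(N^d)^2$), and therefore $BA^\pi N^d=BN^d$; this is what replaces every occurrence of $BA^\pi(CBA^\pi)^d$ by $B(CBA^\pi)^d$. For the last factor $\bigl[\begin{smallmatrix}A^\pi&0\\0&D^\pi\end{smallmatrix}\bigr]-P_2P_1^\#$ I would use $NA^\pi=N$ (so $NA^e=0$, giving $N^dA^e=0$), together with $CD^eB=0$ (from $D^dB=0$) and $A^iC=D^iB=0$ for $i\ge1$, to verify that left-multiplying this factor by $Q_2^d$ reproduces the third matrix of \eqref{Md2}, while each $Q_1^n$ with $n\ge1$ absorbs the factor unchanged. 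Assembling the three pieces yields \eqref{Md2}. This Drazin-identity bookkeeping for $N=CBA^\pi$ is the only genuinely delicate point; everything else is routine block multiplication.

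Part (b) is shorter. Here $CD^\pi=0$ and $BA^\pi=0$ make $Q=\bigl[\begin{smallmatrix}AA^\pi&CD^\pi\\BA^\pi&DD^\pi\end{smallmatrix}\bigr]$ block diagonal, so $Q=Q_1$ is nilpotent with $Q_2=0$; consequently $Q^d=0$, $Q^\pi=I$, and the entire second summand of \eqref{Md} vanishes. In the surviving sum, $CD^e=C$ and $BA^e=B$ reduce $P_2$ to $\bigl[\begin{smallmatrix}0&A^\pi C\\D^\pi B&0\end{smallmatrix}\bigr]$, and since $\ind(Q)=\max\{\ind(A),\ind(D)\}=t$ the index of summation runs over $0\le i\le t-1$, using $(P^d)^{i+1}=(I+P_2P_1^\#)(P_1^\#)^{i+1}$ as established in the proof of Theorem~\ref{applMd-1}. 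This is precisely \eqref{Md3}, completing the corollary.
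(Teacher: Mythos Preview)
Your proposal is correct and follows exactly the route the paper intends: the paper states the corollary simply ``as a consequence of Theorem~\ref{applMd-1}'' without further details, and your argument supplies precisely those details by specializing the hypotheses and carrying out the block simplifications. In particular, your verification that the conditions $AC=0$, $DB=0$ in part~(a) force $S=A$, $Z=D$, $s^\#=A^d$, $s^e=A^e$ (so that statement~(2) of Theorem~\ref{equivalent} holds automatically) is the key observation that the paper leaves implicit, and your reductions using $A^eN^d=0$ and $N^dA^e=0$ for $N=CBA^\pi$ are exactly what is needed to pass from~\eqref{Md} to~\eqref{Md2}.
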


If any one of equivalent statements in Theorem
\ref{equivalent} holds, $A^\pi CD=0$, and $D^\pi BA=0$, then one more
formula for $M^d$ is given.

\begin{theorem} \label{applMd-2}
 If any one of (1)--(12) in Theorem \ref{equivalent} is valid, $A^\pi CD=0$, and $D^\pi BA=0$, then
\begin{eqnarray}M^d&=&\left(\begin{bmatrix}
    A^\pi & 0\\
             0& D^\pi
 \end{bmatrix}-P_1^\#\begin{bmatrix}
    0 & A^eCD^\pi\\
             D^eBA^\pi& 0
 \end{bmatrix}\right) \nonumber\\
 &\times&
 \left[\begin{bmatrix}
             0 & (A^\pi CB)^dA^\pi C\\
             D^\pi B(A^\pi CB)^d& 0\end{bmatrix}\right.\nonumber\\
&+&\left.\sum_{n=1}^{t-1}\begin{bmatrix}
    A^nA^\pi & 0 \\
             0 & D^nD^\pi
 \end{bmatrix}\begin{bmatrix}
             0 & (A^\pi CB)^dA^\pi C\\
             D^\pi B(A^\pi CB)^d\end{bmatrix}^{n+1}\right] \nonumber\\
 &+&\sum_{i=0}^{q-1}(P_1^\#)^{i+1}\left(I+P_1^\#\begin{bmatrix}
    0 & A^e CD^\pi \\
             D^e BA^\pi& 0
 \end{bmatrix}\right)\begin{bmatrix}
    AA^\pi & A^\pi C \\
             D^\pi B & DD^\pi
 \end{bmatrix}^{i} \nonumber\\
 &\times&
 \left[\begin{bmatrix}
    (A^\pi CB)^\pi & 0 \\
             0 & (D^\pi BC)^\pi
 \end{bmatrix}\right. \nonumber\\
 &-&\left.\sum_{m=1}^{t-1}\begin{bmatrix}
    A^mA^\pi & 0 \\
             0 & D^mD^\pi
 \end{bmatrix}\begin{bmatrix}
             0 & (A^\pi CB)^dA^\pi C\\
             D^\pi B(A^\pi CB)^d\end{bmatrix}^m\right],\label{Md4}
\end{eqnarray}
where $t=\max\{\ind(A),\ind(D)\}$, $q=\ind\left(\begin{bmatrix}
    AA^\pi & A^\pi C \\
             D^\pi B & DD^\pi
 \end{bmatrix}\right)$ and
$P_1^\#$ is defined as in Theorem \ref{applMd-1}.
 \end{theorem}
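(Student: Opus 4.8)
The plan is to mirror the proof of Theorem~\ref{applMd-1}, but with the Peirce projections placed on the \emph{left} of the off-diagonal blocks, which is the form the hypotheses $A^\pi CD=0$ and $D^\pi BA=0$ are tailored to. Multiplying these on the right by $D^{d}$ and by $A^{d}$ respectively, I first record the two identities $A^\pi CD^{e}=0$ and $D^\pi BA^{e}=0$, which will be used repeatedly. I would split $M=P+Q$ with
\[
P=\begin{bmatrix} AA^{e} & A^{e}C \\ D^{e}B & DD^{e}\end{bmatrix},\qquad
Q=\begin{bmatrix} AA^\pi & A^\pi C \\ D^\pi B & DD^\pi\end{bmatrix},
\]
the decomposition being valid because $A^{e}+A^\pi=I$ and $D^{e}+D^\pi=I$. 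The decisive point is that here $QP=0$ (rather than $PQ=0$): the diagonal blocks of $QP$ vanish since $A^\pi A^{e}=0=D^\pi D^{e}$, and the off-diagonal blocks vanish by the two recorded identities. I would then apply Lemma~\ref{P+Q} with the two summands interchanged to obtain
\[
M^{d}=P^\pi\sum_{i=0}^{\ind(P)-1}P^{i}(Q^{d})^{i+1}+\sum_{i=0}^{\ind(Q)-1}(P^{d})^{i+1}Q^{i}Q^\pi .
\]

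To handle $P$, write $P=P_1+P_2$ with $P_1=M_E$ and $P_2=\begin{bmatrix}0 & A^{e}CD^\pi\\ D^{e}BA^\pi & 0\end{bmatrix}$, and check $P_2^{2}=0$, $P_2E=0$, and $P_2P_1=0$. Since one of the hypotheses (1)--(12) of Theorem~\ref{equivalent} holds, $P_1=M_E$ is group invertible with $P_1^\#$ given by statement~(12) and $M_E M_E^\#=E$; here is where those hypotheses enter. Applying Lemma~\ref{P+Q} to $P=P_2+P_1$ (again in the reversed orientation forced by $P_2P_1=0$, with $P_2^{d}=0$) yields $P^{d}=P_1^\#+(P_1^\#)^{2}P_2=P_1^\#(I+P_1^\#P_2)$. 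Because $P_1^\#=EP_1^\#E$ lies in the Peirce corner $\mathbb{W}_E$, one gets $P_2P_1^\#=P_2EP_1^\#=0$, from which a short computation gives $PP^{d}=E+P_1^\#P_2$; hence $P$ is in fact group invertible, $\ind(P)=1$, and the first sum above collapses to the single term $P^\pi Q^{d}$ with $P^\pi=\begin{bmatrix}A^\pi&0\\0&D^\pi\end{bmatrix}-P_1^\#P_2$. An easy induction, using $(I+P_1^\#P_2)P_1^\#=P_1^\#$, gives $(P^{d})^{i+1}=(P_1^\#)^{i+1}(I+P_1^\#P_2)$, which is precisely the coefficient in the second sum of \eqref{Md4}.

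For $Q$ I would write $Q=Q_1+Q_2$ with $Q_1=\begin{bmatrix}AA^\pi&0\\0&DD^\pi\end{bmatrix}$ nilpotent of index $t=\max\{\ind(A),\ind(D)\}$ and $Q_2=\begin{bmatrix}0&A^\pi C\\ D^\pi B&0\end{bmatrix}$, and verify $Q_2Q_1=0$ from the hypotheses. The computation of $Q_2^{d}$ through \cite[Theorem 2.1]{COD} is where the hypotheses do their real work: the identities $A^\pi CD^{e}=0$ and $D^\pi BA^{e}=0$ give $A^\pi CB=A^\pi CD^\pi B$ and $D^\pi BC=D^\pi BA^\pi C$, so the two products $(A^\pi C)(D^\pi B)$ and $(D^\pi B)(A^\pi C)$ collapse to $A^\pi CB$ and $D^\pi BC$. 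This turns the off-diagonal Drazin inverse into
\[
Q_2^{d}=\begin{bmatrix}0&(A^\pi CB)^{d}A^\pi C\\ D^\pi B(A^\pi CB)^{d}&0\end{bmatrix},\qquad
Q_2^\pi=\begin{bmatrix}(A^\pi CB)^\pi&0\\0&(D^\pi BC)^\pi\end{bmatrix}.
\]
Applying Lemma~\ref{P+Q} once more (in the orientation dictated by $Q_2Q_1=0$, with $Q_1^{d}=0$) then gives $Q^{d}=\sum_{n=0}^{t-1}Q_1^{n}(Q_2^{d})^{n+1}$ and $Q^\pi=Q_2^\pi-\sum_{m=1}^{t-1}Q_1^{m}(Q_2^{d})^{m}$.

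Finally I would substitute the expressions for $P^\pi$, $Q^{d}$, $(P^{d})^{i+1}$, and $Q^\pi$ into $M^{d}=P^\pi Q^{d}+\sum_{i=0}^{q-1}(P^{d})^{i+1}Q^{i}Q^\pi$, with $q=\ind(Q)$, and collect terms to recover \eqref{Md4}. I expect the main obstacle to be the bookkeeping rather than any single hard step: one must track that all three applications of Lemma~\ref{P+Q} are in the \emph{reversed} orientation (forced by $QP=0$, $P_2P_1=0$, and $Q_2Q_1=0$), verify $\ind(P)=1$ so that the first sum truncates to $P^\pi Q^{d}$, and confirm that $\ind(Q_1)=t$ together with the outer index $q=\ind(Q)$ produce exactly the stated ranges of summation. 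The elementary identities $A^\pi CD^{e}=0$ and $D^\pi BA^{e}=0$ must be invoked consistently to collapse every intertwined product that arises.
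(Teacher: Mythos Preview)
Your proposal is correct and follows essentially the same route as the paper. The paper's own proof merely records the three decompositions $M=P+Q$, $P=P_1+P_2$, $Q=Q_1+Q_2$ exactly as you do and then says the formula follows ``similar to the proof of Theorem~\ref{applMd-1}''; your outline fills in precisely those details, including the correct reversed orientations $QP=0$, $P_2P_1=0$, $Q_2Q_1=0$ in each application of Lemma~\ref{P+Q}, the collapse $A^\pi CD^\pi B=A^\pi CB$ and $D^\pi BA^\pi C=D^\pi BC$, and the truncation of the first sum via $\ind(P)\le 1$. The only step you leave slightly implicit is the passage from $PP^{d}=E+P_1^\#P_2$ to $\ind(P)\le 1$; this is immediate once you note $P(E+P_1^\#P_2)=P_1+EP_2=P$, i.e.\ $PP^\pi=0$.
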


\begin{proof} Using the following decompositions
$$M=\begin{bmatrix}
    AA^{e} & A^{e}C \\
             D^{e}B & DD^{e}
 \end{bmatrix}+\begin{bmatrix}
    AA^\pi & A^\pi C \\
             D^\pi B & DD^\pi
 \end{bmatrix}:=P+Q,$$
$$P=\begin{bmatrix}
    AA^{e} & A^{e}CD^{e} \\
             D^{e}BA^{e} & DD^{e}
 \end{bmatrix}+\begin{bmatrix}
   0 & A^{e}CD^\pi \\
             D^{e}BA^\pi & 0
 \end{bmatrix}:=P_1+P_2,$$
$$Q=\begin{bmatrix}
    AA^\pi & 0 \\
             0 & DD^\pi
 \end{bmatrix}+\begin{bmatrix}
    0 & A^\pi C \\
             D^\pi B & 0
 \end{bmatrix}:=Q_1+Q_2,$$
we establish \eqref{Md4} similar to   the proof of Theorem \ref{applMd-1}.
\end{proof}

Applying Theorem \ref{applMd-2}, we obtain the following result.

\begin{corollary} Let $t$, $q$ and $P_1^\#$ be defined as in Theorem \ref{applMd-2}.

(a) If $CD=0$ and $BA=0$, then
\begin{eqnarray*}M^d&=&
 \begin{bmatrix}
             -A^dCB(A^\pi CB)^d & (A^\pi CB)^dA^\pi C\\
             D^\pi B(A^\pi CB)^d& -D^dB(A^\pi CB)^dC\end{bmatrix}\\
&+&\sum_{n=1}^{t-1}\begin{bmatrix}
    A^nA^\pi & 0 \\
             0 & D^nD^\pi
 \end{bmatrix}\begin{bmatrix}
             0 & (A^\pi CB)^dA^\pi C\\
             D^\pi B(A^\pi CB)^d\end{bmatrix}^{n+1}\\
 &+&\sum_{i=0}^{q-1}\begin{bmatrix}
    (A^d)^{i+1} & (A^d)^{i+2}C \\
            (D^d)^{i+2}B& (D^d)^{i+1}
 \end{bmatrix}\begin{bmatrix}
    AA^\pi & A^\pi C \\
             D^\pi B & DD^\pi
 \end{bmatrix}^{i}\\
 &\times&
 \left[\begin{bmatrix}
    (A^\pi CB)^\pi & 0 \\
             0 & (D^\pi BC)^\pi
 \end{bmatrix}\right.\\
 &-&\left.\sum_{m=1}^{t-1}\begin{bmatrix}
    A^mA^\pi & 0 \\
             0 & D^mD^\pi
 \end{bmatrix}\begin{bmatrix}
             0 & (A^\pi CB)^d C\\
             D^\pi B(A^\pi CB)^d\end{bmatrix}^m\right].
\end{eqnarray*}

(b) If any of (1)--(12) in Theorem \ref{equivalent} is valid, $A^\pi C=0$, and $D^\pi B=0$, then
\begin{eqnarray*}M^d&=&\sum_{i=0}^{t-1}(P_1^\#)^{i+1}\left(I+P_1^\#\begin{bmatrix}
    0 & CD^\pi \\
             BA^\pi& 0
 \end{bmatrix}\right)\begin{bmatrix}
    AA^\pi & 0\\
             0 & DD^\pi
 \end{bmatrix}^{i}.
\end{eqnarray*}
 \end{corollary}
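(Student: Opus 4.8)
The plan is to obtain both parts as specializations of Theorem~\ref{applMd-2}: each stated formula should arise by feeding the stronger hypotheses into \eqref{Md4} and simplifying the constituent blocks by direct calculation, in the same spirit as Corollary~\ref{cor43}.

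For part~(a) I would first record the reductions forced by $CD=0$ and $BA=0$. From $D^{d}=D(D^{d})^{2}$ and $A^{d}=A(A^{d})^{2}$ one gets $CD^{d}=CD^{e}=0$ and $BA^{d}=BA^{e}=0$, hence $CD^{\pi}=C$, $BA^{\pi}=B$, and $CD^{n}=0=BA^{n}$ for every $n\ge 1$. Consequently $S=A$ and $Z=D$, so $s=A^{e}SA^{e}=AA^{e}$ and $z=D^{e}ZD^{e}=DD^{e}$, whose group inverses are $A^{d}$ and $D^{d}$; in particular $s^{\#}=A^{d}$, $z^{\#}=D^{d}$ and $s^{e}=A^{e}$. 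Thus statement~(2) of Theorem~\ref{equivalent} holds automatically, and since $CD=0$, $BA=0$ also give $A^{\pi}CD=0$ and $D^{\pi}BA=0$, Theorem~\ref{applMd-2} applies. Substituting $s^{\#}=A^{d}$, $z^{\#}=D^{d}$, $CD^{d}=0$ and $BA^{d}=0$ into either representation of $P_{1}^{\#}$ collapses it to $\bigl[\begin{smallmatrix}A^{d}&0\\0&D^{d}\end{smallmatrix}\bigr]$, after which $(P_{1}^{\#})^{i+1}$ and the factor $I+P_{1}^{\#}\bigl[\begin{smallmatrix}0&A^{e}CD^{\pi}\\D^{e}BA^{\pi}&0\end{smallmatrix}\bigr]$ produce the third block matrix of the corollary.

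The one step in part~(a) that is not purely mechanical is seeing why the leading factor, to which $\bigl[\begin{smallmatrix}A^{\pi}&0\\0&D^{\pi}\end{smallmatrix}\bigr]-P_{1}^{\#}\bigl[\begin{smallmatrix}0&A^{e}CD^{\pi}\\D^{e}BA^{\pi}&0\end{smallmatrix}\bigr]$ reduces, namely $\bigl[\begin{smallmatrix}A^{\pi}&-A^{d}C\\-D^{d}B&D^{\pi}\end{smallmatrix}\bigr]$, may be absorbed into the nilpotent blocks: using $CD^{n}=0=BA^{n}$ for $n\ge 1$ one checks $\bigl[\begin{smallmatrix}A^{\pi}&-A^{d}C\\-D^{d}B&D^{\pi}\end{smallmatrix}\bigr]\bigl[\begin{smallmatrix}A^{n}A^{\pi}&0\\0&D^{n}D^{\pi}\end{smallmatrix}\bigr]=\bigl[\begin{smallmatrix}A^{n}A^{\pi}&0\\0&D^{n}D^{\pi}\end{smallmatrix}\bigr]$, which turns the first product of \eqref{Md4} into the explicit $2\times 2$ matrix together with the $\sum_{n=1}^{t-1}$ term of the corollary. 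Two small identities finish the matching: $CD^{\pi}B=CB$ (from $CD^{e}=0$) and $(A^{\pi}CB)^{d}A^{\pi}=(A^{\pi}CB)^{d}=A^{\pi}(A^{\pi}CB)^{d}$; the latter holds because $BA^{\pi}=B$ gives $(A^{\pi}CB)A^{\pi}=A^{\pi}CB=A^{\pi}(A^{\pi}CB)$ and because $N^{d}$ is a polynomial in $N$ with zero constant term, and it allows one to replace $(A^{\pi}CB)^{d}A^{\pi}C$ by $(A^{\pi}CB)^{d}C$ wherever it occurs.

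For part~(b) the work is lighter. The hypotheses $A^{\pi}C=0$ and $D^{\pi}B=0$ give $A^{e}C=C$, $D^{e}B=B$ and, crucially, $A^{\pi}CB=0=D^{\pi}BC$, so $(A^{\pi}CB)^{d}=0$, $(A^{\pi}CB)^{\pi}=I$ and $(D^{\pi}BC)^{\pi}=I$; they also force $A^{\pi}CD=0$ and $D^{\pi}BA=0$, so Theorem~\ref{applMd-2} applies once one of (1)--(12) is assumed. Every block built from $(A^{\pi}CB)^{d}$, $A^{\pi}C$ or $D^{\pi}B$ then vanishes, so the entire first product of \eqref{Md4} is zero and the trailing bracket collapses to $I$; meanwhile $\bigl[\begin{smallmatrix}AA^{\pi}&A^{\pi}C\\D^{\pi}B&DD^{\pi}\end{smallmatrix}\bigr]$ becomes $\bigl[\begin{smallmatrix}AA^{\pi}&0\\0&DD^{\pi}\end{smallmatrix}\bigr]$ and $\bigl[\begin{smallmatrix}0&A^{e}CD^{\pi}\\D^{e}BA^{\pi}&0\end{smallmatrix}\bigr]$ becomes $\bigl[\begin{smallmatrix}0&CD^{\pi}\\BA^{\pi}&0\end{smallmatrix}\bigr]$. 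Finally, since $AA^{\pi}$ and $DD^{\pi}$ are nilpotent of indices $\ind(A)$ and $\ind(D)$, the index $q$ of the resulting block-diagonal matrix equals $t=\max\{\ind(A),\ind(D)\}$, so the single surviving sum is exactly the claimed $\sum_{i=0}^{t-1}$. I expect no real obstacle here beyond confirming these vanishings and the identity $q=t$.
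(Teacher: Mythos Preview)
Your argument is correct and is exactly the intended one: the paper simply says ``Applying Theorem~\ref{applMd-2}, we obtain the following result,'' and what you wrote is a faithful, careful unpacking of that specialization, including the auxiliary identities $(A^\pi CB)^dA^\pi=(A^\pi CB)^d$ and $q=t$ that justify the final form of the displayed formulae.
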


We now provide an example  to illustrate our
results.

\begin{example} Consider the $2\times 2$ block matrices $M=\sbmatrix{cc}
A&C\\B&D\endsbmatrix$, where $A=\sbmatrix{ccc} 1&1&0\\0&0&0\\0&0&0
\endsbmatrix$, $C=\sbmatrix{cc}
0&0\\0&0\\0&2\endsbmatrix$, $B=\sbmatrix{ccc} 1&1&1\\0&0&0
\endsbmatrix$, and $D=\sbmatrix{cc} 0&1\\0&0
\endsbmatrix$. Then $A^\#=A$ and $D^d=0$. Since $AC=0$ and $DB=0$, we
apply Corollary \ref{cor43}(a) to obtain
$$M^d=\sbmatrix{ccccc} 1&1&0&0&0\\0&0&0&0&0\\0&0&0&0&0\\1&1&0&0&0\\0&0&0&0&0
\endsbmatrix.$$
\end{example}


\bibliographystyle{amsplain}

\end{document}